\journal{Journal of Algebra}
\def\NN{{\mathbb N}}
\def\QQ{{\mathbb Q}}
\def\G{{\mathcal G}}
\def\H{{\mathcal H}}
\def\I{{\mathcal I}}
\def\fH{{\mathfrak H}}
\def\fT{{\mathfrak T}}
\def\m{{\mathfrak m}}
\def\tR{\tilde{R}}
\def\id{{\rm id}}
\def\ID{{\rm ID}}
\def\cat#1{{\sf #1}}
\def\vect#1{\text{\boldmath $#1$\unboldmath}} 
\def\ie{i.\,e.}
\def\isom{\cong}
\def\restr#1{|_{#1}}
\def\ideal{\unlhd}
\def\gener#1{\left\langle #1 \right\rangle}
\DeclareMathOperator{\Ker}{Ker}
\DeclareMathOperator{\Ima}{Im}
\DeclareMathOperator{\Coker}{Coker}
\DeclareMathOperator{\Hom}{Hom}
\DeclareMathOperator{\Aut}{Aut}
\DeclareMathOperator{\GL}{GL}
\DeclareMathOperator{\Mat}{Mat}
\DeclareMathOperator{\Gal}{Gal}
\DeclareMathOperator{\Isom}{Isom}
\DeclareMathOperator{\Quot}{Quot}
\DeclareMathOperator{\Spec}{Spec}
\newcommand{\uGal}{\underline{\Gal}}
\newcommand{\uIsom}{\underline{\Isom}}
\def\markdef{\bf }
\theoremstyle{plain}
\newtheorem{thm}{Theorem}[section]
\newtheorem{cor}[thm]{Corollary}
\newtheorem{lem}[thm]{Lemma}
\newtheorem{prop}[thm]{Proposition}
\theoremstyle{definition}
\newtheorem{defn}[thm]{Definition}
\newtheorem{exmp}[thm]{Example}
\newtheorem{rem}[thm]{Remark}
\newtheoremstyle{Acknowledgements}
  {}
    {}
     {}
     {}
    {\bfseries}
    {}
     {.5em}
    {\thmname{#1}\thmnumber{ }\thmnote{ (#3)}}
\theoremstyle{Acknowledgements}
\begin{document}

\begin{frontmatter}

\title{Picard-Vessiot theory of differentially simple rings} 

\author{Andreas Maurischat}
\address{\rm {\bf Andreas Maurischat}, Lehrstuhl A f\"ur Mathematik, RWTH Aachen 
University, Templergraben 55, 52056 Aachen, Germany }
\ead{andreas.maurischat@matha.rwth-aachen.de}



\begin{abstract}
In Picard-Vessiot theory, the Galois theory for linear differential equations, the Picard-Vessiot ring plays an
important role, since it is the Picard-Vessiot ring which is a torsor (principal homogeneous space)
for the Galois group (scheme). Like fields are simple rings having only $(0)$ and $(1)$ as ideals, the Picard-Vessiot ring is a differentially simple ring, i.e.~a differential ring having only $(0)$ and $(1)$ as differential ideals.
Having in mind that the classical Galois theory is a theory of extensions of fields, i.e.~of simple rings, it is quite natural to ask whether one can also set up a Picard-Vessiot theory where the base is not a differential field, but more general a differentially simple ring. It is the aim of this article to give a positive answer to this question, i.e.~to set up such a differential Galois theory.
\end{abstract}

\begin{keyword}
Differential Galois theory \sep iterative derivations
\MSC[2010] 12H20 \sep 13B05
\end{keyword}


\end{frontmatter}


\section{Introduction}

Differential Galois theory -- and also difference Galois theory -- is a generalisation of classical Galois theory to transcendental extensions. Instead of polynomial equations, one considers differential resp.~difference equations, and even iterative differential equations in positive characteristic.
One branch of differential Galois theory is Picard-Vessiot theory, the study of linear differential equations. In this case the Galois group turns out to be a linear algebraic group or more generally an affine group scheme of finite type over the field of constants. As one considers solution fields of the polynomial equations in classical Galois theory, there are solution fields in Picard-Vessiot theory.
This however is a flaw in the theory because the Galois group scheme does not act on the solution field but on a subring whose field of fractions is the solution field. This subring, called Picard-Vessiot ring, even has nice properties. For instance, this ring is differentially simple, i.e.~it does not have any differential ideals apart from $\{0\}$ and the ring itself, just as fields are simple rings.
This raises the question whether the Picard-Vessiot theory should be a theory of extensions of differentially simple rings rather than fields.

In this article, we therefore generalise (and modify) the existing Picard-Vessiot theory to extensions of  differentially simple rings.

In positive characteristic, differential rings are replaced by rings with an iterative derivation, since the constants of a derivation are too big in positive characteristic (every $p$-th power is a constant in characteristic $p$). On the other hand, in characteristic zero there is a one-to-one-correspondence between derivations and iterative derivations, as long  as the ring contains the rational numbers -- a condition which is  fulfilled for differentially simple rings. Hence, a Picard-Vessiot theory  of differentially simple rings in characteristic zero is a special case of a Picard-Vessiot theory for iterative differentially simple rings in arbitrary characteristic.

The whole paper therefore is formulated for iterative differential rings (ID-rings for short) of arbitrary characteristic and iterative differential modules (ID-modules for short).
Apart from small modifications, the proofs in this paper also work in more general settings, e.g.~for rings with several commuting iterative derivations. 

The main results are an existence result for Picard-Vessiot rings even if the constants are not algebraically closed (see Thm.~\ref{thm:pv-ring-exists}), the existence of a Galois group scheme $\G$ for a Picard-Vessiot ring $R$ (see Cor.~\ref{cor:galois-group-scheme}), as well as a Galois correspondence between intermediate Picard-Vessiot rings $S\subseteq T\subseteq R$ and closed normal subgroup schemes $\H$ of $\G$.

\smallskip

In Section 2, the  basic notation, some basic examples and first properties of ID-rings are given. We procede in Section 3 with giving some properties of ID-simple rings. Section 4 is dedicated to ID-modules over ID-simple rings. In our definition, ID-modules are finitely generated as modules over the ring.
A main difference to ID-modules over ID-fields is that  ID-modules over ID-rings need not be free as modules. However, we show (see Thm.~\ref{thm:M-projective}) that all ID-modules over ID-simple rings are projective as modules.
This also implies (see Cor.~\ref{cor:submodule-is-fin-gen}) that ID-stable submodules are again finitely generated modules, although our ID-rings are not assumed to be Noetherian.

In Section 5, we are studying Picard-Vessiot rings for ID-modules over a fixed ID-simple ring $S$. We will define them as minimal ID-simple solution rings (Def.~\ref{def:pv-ring}) and show later on that this definition coincides with the usual one (given for example in \cite[Sect.~3]{bhm-mvdp:ideac}) if the ID-module is free as an $S$-module.\\
Furthermore, we give criteria for the existence of a Picard-Vessiot ring. Namely, if the ID-ring $S$ has a $C_S$-rational point (where $C_S$ denotes the field of constants), then for every ID-module $M$ there exists a Picard-Vessiot ring (cf.~\ref{thm:pv-ring-exists}).
As in the case of an ID-field as base ring, we show in Thm.~\ref{thm:c-alg-closed} for a general ID-simple ring that a Picard-Vessiot ring always exists and is even unique up to ID-isomorphisms, if the constants $C_S$ are algebraically closed.

Section 6 is dedicated to the Galois group of a Picard-Vessiot extension and the Galois correspondence. A Galois correspondence is obtained between intermediate PV-rings and closed normal subgroup schemes of the Galois group. Arbitrary closed subgroup schemes don't fit into the picture here, because the factor schemes will not be affine, and hence the corresponding ``ID-scheme'' would not be the spectrum of a ring.

\section{Basic notation}\label{sec:notation}

All rings are assumed to be commutative with unit and different from $\{0\}$.
We will use the following notation (see also \cite{am:igsidgg}). An iterative 
derivation on a ring $R$ is a homomorphism of rings $\theta:R\to
R[[T]]$, such that $\theta^{(0)}=\id_R$ and for all $i,j\geq
0$, $\theta^{(i)}\circ \theta^{(j)}=\binom{i+j}{i}\theta^{(i+j)}$,
where the maps $\theta^{(i)}:R\to R$ are defined by
$\theta(r)=:\sum_{i=0}^\infty \theta^{(i)}(r)T^i$.
The pair $(R,\theta)$ is then called an ID-ring and
$C_R:=\{ r \in R\mid \theta(r)=r\}$ is called the {\markdef ring of
  constants } of $(R,\theta)$. An ideal $I\ideal R$ is called an
 {\markdef ID-ideal} if $\theta(I)\subseteq I[[T]]$ and $R$ is
 {\markdef ID-simple} if $R$ has no
 ID-ideals apart from $\{0\}$ and $R$. An ID-ring which is a field is called an {\markdef ID-field}.
Iterative derivations are extended to localisations by
 $\theta(\frac{r}{s}):=\theta(r)\theta(s)^{-1}$ and to tensor products
 by 
$$\theta^{(k)}(r\otimes s)=\sum_{i+j=k} \theta^{(i)}(r)\otimes
\theta^{(j)}(s)$$ 
for all $k\geq 0$.

A homomorphism of ID-rings $f:S\to R$ is a ring homomorphism $f:S\to R$ s.t. $\theta_R^{(n)}\circ f=f\circ \theta_S^{(n)}$ for all $n\geq 0$.
If $\tR$ is an ID-ring extension of $R$. Then an element $r\in \tR$ is called {\markdef ID-finite} over $R$ if the $R$-submodule of $\tR$ generated by $\{ \theta^{(k)}(r) \mid k\geq 0\}$ 
is finitely generated. 

\medskip

An {\markdef ID-module} $(M,\theta_M)$ over an ID-ring $R$ is a finitely generated $R$-module $M$ together with an iterative derivation $\theta_M$ on $M$, i.e.~an additive map $\theta_M:M\to M[[T]]$ such that $\theta_M(rm)=\theta(r)\theta_M(m)$, $\theta_M^{(0)}=\id_M$ and
$\theta_M^{(i)}\circ \theta_M^{(j)}=\binom{i+j}{i}\theta_M^{(i+j)}$ for all $i,j\geq 0$. 

A subset $N\subseteq M$ of an ID-module $(M,\theta_M)$ is called {\markdef ID-stable}, if $\theta^{(n)}(N)\subseteq N$ for all $n\geq 0$. An {\markdef ID-submodule} of $(M,\theta_M)$ is an ID-stable $R$-submodule $N$ of $M$ which is finitely generated as $R$-module.\footnote{Since $R$ may not be Noetherian, $R$-submodules of finitely generated modules may not be finitely generated.}
For an ID-module $(M,\theta_M)$ and an ID-stable $R$-submodule $N\subseteq M$, the factor module $M/N$ is again an ID-module with the induced iterative derivation.

The free $R$-module $R^n$ is an example of an ID-module over an ID-ring $R$ with  iterative derivation given componentwise. An ID-module $(M,\theta_M)$ over $R$ is called {\markdef trivial} if $M\isom R^n$ as ID-modules, i.e.~if $M$ has a basis of constant elements.

For ID-modules $(M,\theta_M)$, $(N,\theta_N)$, the {\markdef direct sum} $M\oplus N$ is an ID-module with iterative derivation given componentwise, and the {\markdef tensor product} $M\otimes_R N$ is an ID-module with iterative derivation $\theta_\otimes$ given by 
$$\theta_\otimes^{(k)}(m\otimes n):=\sum_{i+j=k} \theta_M^{(i)}(m)\otimes \theta_N^{(j)}(n)\qquad \text{for all }k\geq 0.$$

For ID-modules $(M,\theta_M)$, $(N,\theta_N)$, a {\markdef morphism} $f:(M,\theta_M)\to(N,\theta_N)$ of ID-modules is a homomorphism $f:M\to N$ of the underlying modules such that $\theta_N^{(k)}\circ f=f\circ \theta_M^{(k)}$ for all $k\geq 0$. For a morphism $f:(M,\theta_M)\to(N,\theta_N)$,  the kernel $\Ker(f)$ and the image $\Ima(f)$ are ID-stable $R$-submodules of $M$ resp.~$N$. The image $\Ima(f)$ is indeed an ID-submodule, since it is isomorphic to $M/\Ker(f)$. Also the cokernel $\Coker(f)$ is an ID-module.

\begin{exmp}\label{ex:ID-rings}
\begin{enumerate}
\item For any field $C$ and $R:=C[t]$, the homomorphism of $C$-algebras
$\theta_t:R \to R[[T]]$
given by $\theta_t(t):=t+T$ is an iterative derivation on $R$ with field of constants $C$. This
iterative derivation will be called the {\markdef iterative derivation with respect to~$t$}. $R$ is indeed an ID-simple ring, since for any polynomial $0 \ne f\in R$ of degree $n$, $\theta^{(n)}(f)$ equals the leading coefficient of $f$, and hence is invertible in $R=C[t]$.
\item\label{item:der by t} For any field $C$,  $C[[t]]$ also is an ID-ring with the iterative derivation with respect to $t$, given by $\theta_t(f(t)):=f(t+T)$ for $f\in C[[t]]$. The constants of $(C[[t]],\theta_t)$ are $C$, and $(C[[t]],\theta_t)$ also is ID-simple, since for 
$f=\sum_{i=n}^\infty a_i t^i\in C[[t]]$ with $a_n\ne 0$, one has
$$\theta_t^{(n)}(f)= \sum_{i=n}^\infty a_i \binom{i}{n} t^{i-n}\in C[[t]]^\times.$$
Hence, every non-zero ID-ideal contains a unit.
This ID-ring will play an important role, since every ID-simple ring can be ID-embedded into $C[[t]]$ for an appropriate field $C$ (comp.~Thm.~\ref{thm:embedding}).
\item For any ring $R$, there is the {\markdef trivial} iterative derivation on $R$ given by
$\theta_0:R\to R[[T]],r\mapsto r\cdot T^0$. Obviously, the ring of constants of $(R,\theta_0)$ is
$R$ itself.
\item Given a differential ring $(R,\partial)$ containing the rationals (i.e.~a $\QQ$-algebra $R$ with a derivation $\partial$), then $\theta^{(n)}:=\frac{1}{n!}\partial^n$ defines an iterative derivation on $R$. On the other hand, for an iterative derivation $\theta$, the map $\theta^{(1)}$ always is a derivation. Hence, differential rings containing $\QQ$ are special cases of ID-rings.\\
Since for a differentially simple ring in characteristic zero, its ring of constants always is a field (same proof as for ID-simple rings), we see that  the Picard-Vessiot theory for ID-simple rings, we provide here, contains a Picard-Vessiot theory for differentially simple rings in characteristic zero as a special case.
\end{enumerate}
\end{exmp}

\begin{rem}
We will not assume our rings to be Noetherian. Hence, for an ID-module over an ID-ring $R$, there might exist  $R$-submodules which are stable under the iterative derivation, but are not finitely generated as $R$-module, and therefore are not ID-modules in our definition. In particular, the kernel of a morphism of ID-modules is not an ID-module in general.\\
Another problem that might occur is concerned with ID-finiteness of elements. In general, the set of ID-finite elements in a ring extension $\tR$ does not have any extra structure (sums and products of ID-finite elements may be not ID-finite). Furthermore, there might be elements $r\in R$ which are not ID-finite over $R$, since the ideal generated by all $\theta^{(k)}(r)$ ($k\geq 0$) does not need to be finitely generated.
For ID-simple rings, however, both points will work out fine as we will see in Cor.~\ref{cor:submodule-is-fin-gen}, resp.~in Prop.~\ref{prop:ID-finite}, and Cor.~\ref{cor:ID-finite-subalgebra}.
\end{rem}

\begin{prop}\label{prop:constants-of-triv-ext}
Let $(R,\theta)$ be an ID-ring, $C:=C_R$ its ring of constants, and let $D/C$ be a ring extension such that $D$ is free as $C$-module. Moreover, let $D$ be equipped with the trivial iterative derivation $\theta_D(d)=d\in D[[T]]$ for all $d\in D$.
Then the constants of $R\otimes_C D$ are exactly the elements $1\otimes d$, $d\in D$. 
\end{prop}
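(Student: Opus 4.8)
The plan is to prove the two inclusions separately, with the only genuinely non-trivial work hidden in the freeness hypothesis. First I would record the key simplification: since $\theta_D$ is trivial, $\theta_D^{(j)}(d)=0$ for every $j\geq 1$, so the tensor-product formula collapses to
$$\theta^{(k)}(r\otimes d)=\theta^{(k)}(r)\otimes d\qquad \text{for all }k\geq 0.$$
From this the inclusion ``$\supseteq$'' is immediate: applying $\theta$ to $1$ inside $R[[T]]$ and using that $\theta$ is a ring homomorphism gives $\theta^{(k)}(1)=0$ for all $k\geq 1$, whence $\theta^{(k)}(1\otimes d)=\theta^{(k)}(1)\otimes d=0$ for $k\geq 1$, so every $1\otimes d$ is a constant.

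For the reverse inclusion I would use freeness to reduce the problem to a statement about coefficients. Choose a $C$-basis $(d_\alpha)_{\alpha\in A}$ of $D$; then $(1\otimes d_\alpha)_{\alpha}$ is an $R$-basis of $R\otimes_C D$, so any element can be written uniquely as $x=\sum_\alpha r_\alpha\otimes d_\alpha$ with $r_\alpha\in R$, almost all zero. Using the displayed formula, $\theta^{(k)}(x)=\sum_\alpha \theta^{(k)}(r_\alpha)\otimes d_\alpha$, and since the $1\otimes d_\alpha$ are $R$-linearly independent, the vanishing of $\theta^{(k)}(x)$ for every $k\geq 1$ forces $\theta^{(k)}(r_\alpha)=0$ for all $\alpha$ and all $k\geq 1$. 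Hence each $r_\alpha$ lies in $C=C_R$.

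Finally I would pull the constant scalars across the tensor product: for $r_\alpha\in C$ one has $r_\alpha\otimes d_\alpha=1\otimes(r_\alpha d_\alpha)$, so
$$x=\sum_\alpha 1\otimes(r_\alpha d_\alpha)=1\otimes\Bigl(\sum_\alpha r_\alpha d_\alpha\Bigr)=1\otimes d$$
with $d:=\sum_\alpha r_\alpha d_\alpha\in D$, as required. The one place where freeness is really needed is the passage from $\theta^{(k)}(x)=0$ to $\theta^{(k)}(r_\alpha)=0$ for each $\alpha$: it legitimises reading off coefficients against a basis and rules out hidden $C$-linear relations among the $d_\alpha$ that could conspire to produce additional constants. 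This is the main (and essentially the only) subtle point; everything else is a direct computation.
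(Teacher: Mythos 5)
Your proof is correct and follows essentially the same route as the paper's: both expand an arbitrary constant against a $C$-basis of $D$, use the $R$-linear independence of the basis elements $1\otimes d_\alpha$ (which is where freeness enters) to conclude each coefficient $r_\alpha$ lies in $C$, and then pull the constant scalars across the tensor product. Your write-up is merely more explicit about the collapse of the Leibniz formula and about why $1\otimes d$ is constant, which the paper dismisses as immediate from the definition.
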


\begin{proof}
By definition all elements $1\otimes d$ are constant. For proving that there are no others, let $(d_i)_{i\in I}$ be a basis of $D$ as $C$-module, and consider an arbitrary constant element $\sum_{i\in I} r_i\otimes d_i\in R\otimes_C D$ (almost all $r_i$ equal to $0$). Then for all $k\geq 0$, 
$$0=\theta^{(k)}(\sum_{i\in I} r_i\otimes d_i)=\sum_{i\in I} \theta^{(k)}(r_i)\otimes d_i.$$
Therefore, all $r_i$ are constant, i.e.~$r_i\in C$.\\
Hence, $\sum_{i\in I} r_i\otimes d_i= 1\otimes (\sum_{i\in I}  r_id_i)$.
\end{proof}

\section{Properties of ID-simple rings}

Since our basic objects will be ID-simple rings, we will now summarize some properties of ID-simple rings:
\begin{prop}\label{prop:first-properties}
Let $(S,\theta)$ be an ID-simple ring. Then
\begin{enumerate}
\item $S$ is an integral domain.
\item The field of fractions of $S$ has the same constants as $S$.
\item The ring of constants of $S$ is a field.
\end{enumerate}
\end{prop}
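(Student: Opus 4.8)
The plan is to treat the three parts in the order (iii), then (i), then (ii), since (ii) presupposes that $S$ is a domain and my argument for (ii) reuses the same ``ID-ideal from a denominator'' trick that makes (iii) transparent. The unifying principle is that in an ID-simple ring any naturally occurring ideal which one can show to be ID-stable must be either $(0)$ or $S$, so the whole game is to build the right ID-stable ideal. For (iii), given $0\neq c\in C_S$, the principal ideal $cS$ is ID-stable because $\theta(cs)=\theta(c)\theta(s)=c\,\theta(s)\in cS[[T]]$ (using $\theta(c)=c$); hence $cS=S$, so $c$ is a unit, and applying $\theta$ to $cc^{-1}=1$ shows $c^{-1}$ is again constant. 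For (ii), given a constant $x=\frac{a}{b}$ in the fraction field, the ``denominator ideal'' $\{s\in S : sx\in S\}$ is ID-stable by the same computation $\theta(sx)=\theta(s)x$, contains $b\neq0$, hence equals $S$; thus $1\cdot x\in S$ and $x\in C_S$, so the fraction field has no new constants.

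The substance of the proposition is part (i), and I would prove it in two steps, each producing a proper ID-ideal that ID-simplicity forces to vanish. First I would show $S$ is reduced by checking that the nilradical $N$ is ID-stable: if $a^n=0$ then $\theta(a)^n=\theta(a^n)=0$ in $S[[T]]$, and for every prime $\mathfrak q\ideal S$ the image of $\theta(a)$ in $(S/\mathfrak q)[[T]]$ is a nilpotent element of a power-series ring over a domain, hence zero; therefore $\theta^{(k)}(a)\in\mathfrak q$ for all $k$ and all $\mathfrak q$, i.e.\ $\theta^{(k)}(a)\in N$. Since $N\neq S$, ID-simplicity gives $N=(0)$, so $S$ is reduced.

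Second, I would show that every minimal prime $\mathfrak p$ of the now reduced ring $S$ is itself an ID-ideal; as a prime it is proper, so ID-simplicity forces $\mathfrak p=(0)$, whence $(0)$ is prime and $S$ is a domain. Here I use the standard description of minimal primes in a reduced ring: $a\in\mathfrak p$ if and only if $sa=0$ for some $s\notin\mathfrak p$. The main obstacle, and the one genuinely delicate point in the whole proposition, is that the obvious annihilator ideals are not visibly stable under $\theta$, since the Leibniz expansion of $\theta^{(k)}(sa)$ mixes the derivatives of $s$ and of $a$. I expect to resolve this by an induction on $k$ that isolates the lowest term: from $sa=0$ we get $\theta(s)\theta(a)=0$, so for each $k$ the coefficient of $T^k$ yields $\sum_{i+j=k}\theta^{(i)}(s)\theta^{(j)}(a)=0$; assuming inductively that $\theta^{(j)}(a)\in\mathfrak p$ for $j<k$, every summand except the $i=0$ term lies in $\mathfrak p$, leaving $s\,\theta^{(k)}(a)\in\mathfrak p$, and since $s\notin\mathfrak p$ primeness gives $\theta^{(k)}(a)\in\mathfrak p$. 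This is precisely where reducedness (through the annihilator description of $\mathfrak p$) and primeness cooperate to defeat the mixing in the product rule, and it is the step I would expect to require the most care to state cleanly.
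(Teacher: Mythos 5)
Your proof is correct, but it takes a genuinely different route from the paper's, whose proof of this proposition is essentially a citation: for (i) and (ii) the author simply refers to \cite[Lemma 3.2]{bhm-mvdp:ideac} (remarking that (ii) also falls out of Prop.~\ref{prop:ID-finite}, since constants are ID-finite), and (iii) is then deduced from (ii) because inverses of constants in $\Quot(S)$ are again constants. You instead prove everything from scratch, in a different logical order. Your part (i) --- the only part with real substance --- supplies exactly the argument the paper outsources: you show the nilradical is an ID-ideal by observing that the nilpotent element $\theta(a)$ must vanish in every $(S/\mathfrak{q})[[T]]$, which is a domain; then you show each minimal prime $\mathfrak{p}$ of the reduced ring is an ID-ideal, using the annihilator description of minimal primes together with an induction in which primeness absorbs the cross terms of the Leibniz expansion (from $sa=0$ one gets $\sum_{i+j=k}\theta^{(i)}(s)\theta^{(j)}(a)=0$, and your induction correctly isolates $s\,\theta^{(k)}(a)\in\mathfrak{p}$ with $s\notin\mathfrak{p}$). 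Both steps are sound. Your (ii) is, in substance, the paper's own proof of Prop.~\ref{prop:ID-finite} specialized to the cyclic module generated by the single constant $x$: the denominator ideal $\{s\in S : sx\in S\}$ is ID-stable precisely because $\theta(sx)=\theta(s)x$, the one case where no induction on the Leibniz rule is needed. Your (iii) is more direct than the paper's and slightly stronger: since $cS$ is itself an ID-ideal, every nonzero constant is a unit of $S$ (not merely invertible in $\Quot(S)$), so your (iii) depends on neither (i) nor (ii), whereas the paper's derivation of (iii) passes through (ii). What each approach buys: the paper's is short and consistent with its practice of importing base facts from \cite{bhm-mvdp:ideac}; yours makes the proposition self-contained and displays the uniform mechanism behind it --- every claim reduces to exhibiting a proper ID-stable ideal and invoking ID-simplicity.
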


\begin{proof}
i) and ii) are proved in \cite[Lemma 3.2]{bhm-mvdp:ideac}. However, ii) also follows as a special case of Prop.~\ref{prop:ID-finite}, since constants are ID-finite elements. Part iii) follows from ii), since the inverses of constants are constants, and hence the ring of constants of an ID-field is indeed a field.
\end{proof}

\begin{prop}\label{prop:ID-finite}
Let $(S,\theta)$ be an ID-simple ring. Then an element $x\in \Quot(S)$ is ID-finite over $S$ if and only if $x\in S$.
\end{prop}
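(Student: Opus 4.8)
The plan is to prove the two implications separately, reducing the nontrivial direction to the ID-simplicity of $S$ by means of a conductor ideal. Throughout, write $M$ for the $S$-submodule of $\Quot(S)$ generated by $\{\theta^{(k)}(x)\mid k\geq 0\}$; so ``$x$ is ID-finite'' means ``$M$ is finitely generated''. Using $\theta^{(j)}\circ\theta^{(k)}=\binom{j+k}{j}\theta^{(j+k)}$ together with the Leibniz rule $\theta^{(j)}(sm)=\sum_{a+b=j}\theta^{(a)}(s)\theta^{(b)}(m)$, one checks directly that $M$ is ID-stable, i.e.~$\theta^{(j)}(M)\subseteq M$ for all $j$.

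For the implication $x\in S\Rightarrow x$ ID-finite, note that in this case $M$ is an ideal of $S$, and being ID-stable it is an ID-ideal. By ID-simplicity $M=\{0\}$ or $M=S$, so in either case $M$ is finitely generated and $x$ is ID-finite over $S$.

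For the converse, assume $x\in\Quot(S)$ is ID-finite, so $M$ is finitely generated, and introduce the conductor $J:=\{s\in S\mid sM\subseteq S\}$, an ideal of $S$. First I would check $J\neq\{0\}$: writing $M=\sum_{i=1}^n S m_i$ with $m_i=a_i/b_i$, $b_i\in S\setminus\{0\}$, the element $d:=b_1\cdots b_n$ is nonzero since $S$ is a domain (Prop.~\ref{prop:first-properties}) and satisfies $dM\subseteq S$, whence $d\in J$. The key step is then to show $J$ is ID-stable, so that $J$ is a nonzero ID-ideal and hence $J=S$ by ID-simplicity; this yields $1\in J$, i.e.~$M\subseteq S$, and in particular $x=\theta^{(0)}(x)\in M\subseteq S$.

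To prove ID-stability of $J$ I would argue by induction on $n$ that $\theta^{(n)}(s)\,m\in S$ for every $s\in J$, every $m\in M$, and all $n\geq 0$ (which is exactly $\theta(J)\subseteq J[[T]]$). The case $n=0$ is the definition of $J$. For the inductive step, since $sm\in S$ and $\theta$ maps $S$ into $S[[T]]$, we have $\theta^{(n)}(sm)\in S$; the Leibniz rule gives $\theta^{(n)}(sm)=\sum_{i+j=n}\theta^{(i)}(s)\theta^{(j)}(m)$, whose $i=n$ summand is $\theta^{(n)}(s)\,m$. Each remaining summand has $i<n$ and, by ID-stability of $M$, satisfies $\theta^{(j)}(m)\in M$, so it lies in $S$ by the induction hypothesis; rearranging forces $\theta^{(n)}(s)\,m\in S$. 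The main obstacle is precisely this ID-stability of the conductor: one must control the mixed terms produced by the Leibniz rule (and the binomial coefficients hidden in $\theta^{(j)}\circ\theta^{(k)}$) using only that $\theta$ behaves well on $S$, not on all of $\Quot(S)$, and the induction above is what makes this work. Everything else is a formal reduction to ID-simplicity.
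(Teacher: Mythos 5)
Your proposal is correct and follows essentially the same route as the paper: the forward direction observes that the module generated by the $\theta^{(n)}(x)$ is an ID-ideal of $S$ (hence $\{0\}$ or $S$, finitely generated either way), and the converse uses the same conductor ideal $\{s\in S\mid sM\subseteq S\}$, shown nonzero via a product of denominators and shown to be an ID-ideal by exactly the same induction isolating the $i=n$ term of the Leibniz expansion of $\theta^{(n)}(sm)$. The only cosmetic difference is that you phrase the induction hypothesis as ``$\theta^{(i)}(s)\,m'\in S$ for all $m'\in M$'' while the paper writes ``$\theta^{(i)}(s)\in I$'', which is the same statement.
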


\begin{proof}
If $x\in S$, then $I:=\gener{\theta^{(n)}(x) \mid n\in\NN}_S$ is an ID-ideal of $S$, hence $I=\{0\}$ or $I=S=\gener{1}_S$. In both cases $I$ is finitely generated, and hence $x$ is ID-finite.

Now assume $x\in \Quot(S)$ is ID-finite over $S$, so by definition the $S$-module $M:=\gener{\theta^{(n)}(x) \mid n\in\NN}_S\subseteq \Quot(S)$ is finitely generated. $M$ is also stable under the iterative derivation, as is easily verified by calculation.
 The ideal $I:=\{ s\in S\mid s m\in S \ \forall m\in M\}$ is non-zero, since it contains the product of the denominators of generators of $M$.\\
We will show that $I$ is an ID-ideal. From this the claim follows, since by ID-simplicity of $S$, this will imply $I=S$, and hence $1\cdot x\in S$.\\
For all $s,m\in \Quot(S)$, $n\in \NN$ the equation
$$\theta^{(n)}(s\cdot m)=\sum_{i+j=n} \theta^{(i)}(s)\theta^{(j)}(m)=\theta^{(n)}(s)\cdot m + \sum_{i=0}^{n-1} \theta^{(i)}(s)\theta^{(n-i)}(m)$$
holds. In particular, for all $s\in I$, $m\in M$ we inductively obtain for all~$n\in \NN$:
$$\theta^{(n)}(s)\cdot m=\theta^{(n)}(\underbrace{s\cdot m}_{\in S}) - \sum_{i=0}^{n-1} \underbrace{\theta^{(i)}(s)}_{\in I\text{ by ind.hyp.}}\underbrace{\theta^{(n-i)}(m)}_{\in M} \in S,$$
and hence, $\theta^{(n)}(s)\in I$. Therefore, $I$ is an ID-ideal.
\end{proof}

\begin{prop}\label{prop:ideal-bijection}
Let $(S,\theta)$ be an ID-simple ring and $C:=C_S$ its field of constants, and let $D$ be a finitely generated $C$-algebra equipped with the trivial iterative derivation. Then there is a bijection
\begin{eqnarray*}
\I(D) &{\xymatrix{\ar@{<->}[r]&}} & \I^{\ID}(S\otimes_C D) \\
I & {\xymatrix{\ar@{|->}[r]&}} & S\otimes_C I \\
 J\cap (1\otimes_C D) & {\xymatrix{\ar@{<-|}[r]&}}& J
\end{eqnarray*}
\noindent between the ideals of $D$ and the ID-ideals of $S\otimes_C D$.
\end{prop}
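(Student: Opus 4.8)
The plan is to verify that both assignments are well defined and mutually inverse, with the substantive content concentrated in a single simplicity argument. Since $C$ is a field (Prop.~\ref{prop:first-properties}), $S$ is free as a $C$-module, so $S\otimes_C-$ is exact and $S\otimes_C I$ embeds as an ideal of $S\otimes_C D$ for every ideal $I\ideal D$. It is an ID-ideal precisely because the derivation on $D$ is trivial: for $s\in S$, $i\in I$ one has $\theta_\otimes^{(k)}(s\otimes i)=\theta^{(k)}(s)\otimes i\in S\otimes_C I$. Thus the upward map is well defined, and the downward map $J\mapsto J\cap(1\otimes_C D)$ manifestly sends an ID-ideal to an ideal of $D\isom 1\otimes_C D$.

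For the composite $I\mapsto S\otimes_C I\mapsto (S\otimes_C I)\cap(1\otimes_C D)$, I would fix a $C$-basis $(s_\alpha)$ of $S$ with $s_{\alpha_0}=1$; then $S\otimes_C D=\bigoplus_\alpha s_\alpha\otimes D$ and $S\otimes_C I=\bigoplus_\alpha s_\alpha\otimes I$, so an element $1\otimes d$ lies in $S\otimes_C I$ exactly when $d\in I$. Hence this composite is the identity. The other composite is where the work lies. Given an ID-ideal $J$, set $I_0:=\{d\in D\mid 1\otimes d\in J\}$, so that $J\cap(1\otimes_C D)=1\otimes_C I_0$. The inclusion $S\otimes_C I_0\subseteq J$ is immediate since $s\otimes d=(s\otimes 1)(1\otimes d)\in J$ for $d\in I_0$. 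For the reverse inclusion I would pass to $\bar D:=D/I_0$ (again with trivial derivation) and to the image $\bar J$ of $J$ in $S\otimes_C\bar D\isom (S\otimes_C D)/(S\otimes_C I_0)$, which is again an ID-ideal; a short check shows $\bar J\cap(1\otimes_C\bar D)=0$. It then suffices to prove the following key lemma: if an ID-ideal $J$ of $S\otimes_C D$ satisfies $J\cap(1\otimes_C D)=0$, then $J=0$.

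This key lemma is the main obstacle, and I would settle it by a minimal-length/leading-coefficient argument exploiting ID-simplicity. Fix a $C$-basis $(e_\beta)$ of $D$, so every element of $S\otimes_C D$ has a unique finite expression $\sum_\beta t_\beta\otimes e_\beta$; call the number of nonzero $t_\beta$ its length. Supposing $J\ne0$, choose $0\ne x\in J$ of minimal length $n$, say $x=\sum_{i=1}^n s_i\otimes e_i$ with the $e_i$ $C$-linearly independent and all $s_i\ne0$. The set $\mathfrak{a}:=\{s\in S\mid s\otimes e_1+\sum_{i=2}^n s_i'\otimes e_i\in J\text{ for some }s_i'\in S\}$ is an ideal of $S$, nonzero since $s_1\in\mathfrak{a}$, and it is ID-stable because applying $\theta_\otimes^{(k)}$ to a witness $z\in J$ replaces the coefficient of $e_1$ by $\theta^{(k)}(s)$ (the trivial derivation on $D$ fixing the $e_i$). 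By ID-simplicity $\mathfrak{a}=S$, so there is $z=1\otimes e_1+\sum_{i=2}^n s_i'\otimes e_i\in J$. For $k\ge1$ the element $\theta_\otimes^{(k)}(z)=\sum_{i=2}^n\theta^{(k)}(s_i')\otimes e_i$ lies in $J$ and has length $<n$, hence vanishes by minimality; linear independence of the $e_i$ forces each $s_i'$ to be a constant, i.e.~$s_i'\in C$.

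Therefore $z=1\otimes\bigl(e_1+\sum_{i=2}^n s_i'e_i\bigr)$ is a nonzero element of $J\cap(1\otimes_C D)$, contradicting the hypothesis of the lemma; hence $J=0$. Applied to $\bar J$, this yields $J\subseteq S\otimes_C I_0$, and together with the reverse inclusion we obtain $J=S\otimes_C I_0=S\otimes_C\bigl(J\cap(1\otimes_C D)\bigr)$. Thus the two maps are mutually inverse, establishing the bijection. I expect the only delicate points beyond the key lemma to be the bookkeeping in the reduction step (verifying $\bar J$ is an ID-ideal with trivial intersection), which is routine once the quotient $S\otimes_C\bar D$ is identified correctly.
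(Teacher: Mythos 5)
Your proof is correct. Note that the paper offers no internal argument for this proposition at all --- its ``proof'' is the bare citation \cite[Lemma~10.7]{am:gticngg} --- so what you have written supplies exactly the argument that the paper delegates to the reference, and your core mechanism coincides with the standard one behind that cited lemma (and behind its ID-field analogue): fix a $C$-basis of $D$, take a nonzero element of the ID-ideal of minimal length, observe that the set $\mathfrak{a}$ of leading coefficients (with witnesses supported on the finitely many basis vectors involved) is a nonzero ID-ideal of $S$, invoke ID-simplicity to normalize the leading coefficient to $1$, and then apply $\theta^{(k)}$ for $k\geq 1$ so that minimality forces the remaining coefficients to be constants, yielding a nonzero element of $J\cap(1\otimes_C D)$. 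Your packaging differs mildly from the usual write-ups: you isolate a key lemma (an ID-ideal with $J\cap(1\otimes_C D)=0$ must vanish) and reduce to it by passing to $\bar D=D/I_0$, using exactness of $S\otimes_C-$ over the field $C$ to identify $(S\otimes_C D)/(S\otimes_C I_0)\isom S\otimes_C \bar D$ as ID-rings, whereas the standard proofs run the same induction on length directly on $J$, subtracting off multiples of elements of $J\cap(1\otimes_C D)$; both are equally rigorous, and your reduction step is verified correctly. A further point worth noting: your argument never uses that $D$ is finitely generated as a $C$-algebra (only that it has a $C$-basis), so you have in fact proved the statement for arbitrary $C$-algebras $D$ equipped with the trivial iterative derivation; the finite-generation hypothesis in the proposition is only relevant for the later applications in the paper, where one wants the resulting schemes to be of finite type over $C$.
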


\begin{proof}
cf. \cite[Lemma 10.7]{am:gticngg}.
\end{proof}

\begin{thm}\label{thm:embedding}
Let $(S,\theta)$ be an ID-simple ring,  $\m\ideal S$ a maximal ideal, and $C=S/\m$ the residue field. Then $(S,\theta)$ can be embedded into $(C[[t]],\theta_t)$ as ID-ring.
\end{thm}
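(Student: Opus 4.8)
The plan is to build the embedding explicitly as a ``Taylor expansion at $\m$'' using the iterative derivation. Let $\pi\colon S\to S/\m=C$ be the residue map, and define
$$\Phi\colon S\to C[[t]],\qquad \Phi(s):=\sum_{k=0}^\infty \pi\bigl(\theta^{(k)}(s)\bigr)\,t^k.$$
Equivalently, $\Phi$ is the composite of $\theta\colon S\to S[[T]]$ with the coefficientwise reduction $S[[T]]\to C[[T]]$, after renaming the variable $T$ as $t$. I would first record the three things that need checking: that $\Phi$ is a ring homomorphism, that it intertwines $\theta$ with $\theta_t$, and that it is injective.

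Since $\theta$ is a homomorphism of rings by definition of an iterative derivation, and coefficientwise reduction modulo $\m$ is a ring homomorphism $S[[T]]\to C[[T]]$, the composite $\Phi$ is automatically a ring homomorphism; in particular $\Phi(1)=1$ because $\theta^{(0)}=\id$ and $\theta^{(k)}(1)=0$ for $k>0$. For the ID-compatibility I would compute both sides of $\Phi\circ\theta^{(j)}=\theta_t^{(j)}\circ\Phi$ directly. Using the iteration rule $\theta^{(k)}\circ\theta^{(j)}=\binom{k+j}{k}\theta^{(k+j)}$ gives
$$\Phi\bigl(\theta^{(j)}(s)\bigr)=\sum_{k\geq 0}\binom{k+j}{j}\pi\bigl(\theta^{(k+j)}(s)\bigr)\,t^k,$$
while the definition $\theta_t(f)=f(t+T)$ from Example~\ref{ex:ID-rings} yields $\theta_t^{(j)}(t^i)=\binom{i}{j}t^{i-j}$, so that after the substitution $i=k+j$ the expression $\theta_t^{(j)}(\Phi(s))$ produces exactly the same series. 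Matching coefficients, together with $\binom{k+j}{k}=\binom{k+j}{j}$, shows the two maps agree, so $\Phi$ is a homomorphism of ID-rings.

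It remains to prove injectivity, and here ID-simplicity does the work. Because $\Phi$ is an ID-homomorphism, $\Ker(\Phi)$ is an ID-ideal; concretely $s\in\Ker(\Phi)$ means $\theta^{(k)}(s)\in\m$ for all $k$, and the iteration rule shows this set is closed under every $\theta^{(j)}$. Since $\Phi(1)=1\neq 0$, the kernel is a proper ID-ideal, hence $\Ker(\Phi)=\{0\}$ by ID-simplicity of $S$, and $\Phi$ is the desired ID-embedding into $(C[[t]],\theta_t)$. The only genuinely delicate step is the coefficient bookkeeping in the ID-compatibility check; everything else is formal, so I would make sure the binomial identity coming from $\binom{k+j}{j}$ on one side and $\binom{i}{j}$ with $i=k+j$ on the other is written out carefully.
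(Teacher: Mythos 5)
Your proof is correct and takes essentially the same route as the paper: both construct the embedding as the Taylor expansion $s\mapsto \sum_{k\geq 0}\pi\bigl(\theta^{(k)}(s)\bigr)t^k$ (i.e., $\theta:S\to S[[t]]$ followed by coefficientwise reduction modulo $\m$), and both obtain injectivity by observing that the kernel is a proper ID-ideal, hence zero by ID-simplicity. The only difference is cosmetic: the paper factors the map through $S[[t]]$ and dismisses the ID-compatibility as ``easy to check,'' whereas you carry out that binomial-coefficient verification explicitly.
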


\begin{proof}
The iterative derivation $\theta$ induces an injective ring homomorphism $\tilde{\theta}:S\to S[[t]], x\mapsto \sum_{n=0}^\infty \theta^{(n)}(x)t^n$, and it is easy to check, that $\tilde{\theta}$ is indeed an ID-homomorphism $(S,\theta)\to (S[[t]],\theta_t)$ where $\theta_t$ denotes the iterative derivation with respect to $t$ (comp.~Example \ref{ex:ID-rings}\ref{item:der by t}). Since $\m[[t]]$ is an ID-ideal of $S[[t]]$ and $S$ is ID-simple, also $\bar{\theta}:S\to (S/\m)[[t]]=C[[t]]$ is injective which is the desired ID-embedding.
\end{proof}

\section{ID-modules over ID-simple rings}

Throughout the section, let $C$ denote an arbitrary field, and let $(S,\theta)$ denote an ID-simple ring with field of constants $C$.

\begin{rem}
For a finitely generated $S$-module $M$, the following conditions are equivalent (see \cite[Section II.5.2, Theorem 1]{nb:cac1-7}):
\begin{enumerate}
\item $M$  is projective.
\item $M$  is finitely presented and locally free in the weaker sense, i.e.~for every prime ideal $P\ideal S$ the localisation $M_P=S_P\otimes_S M$ is a free $S_P$-module.
\item $M$  is locally free in the stronger sense: there exist $x_1,\dots, x_r\in S$, generating the unit ideal, such that for each $i$, $M[\frac{1}{x_i}]$ is a free $S[\frac{1}{x_i}]$-module.
\end{enumerate}
Furthermore, Cartier showed in \cite[Appendice, Lemme 5]{pc:qrdga}, that  the condition ``finitely presented" in ii) is superfluous if $S$ is an integral domain.

Since,  ID-simple rings are integral domains by Proposition \ref{prop:first-properties}, in our situation the conditions \emph{projective}, \emph{locally free in the weaker sense} and \emph{locally free in the stronger sense} are equivalent for finitely generated modules.
\end{rem}

\begin{lem}\label{lem:M-free}
Assume that $S$ is a local ring and let $\m$ denote the maximal ideal. Let $(M,\theta_M)$ be an ID-module over $(S,\theta)$. Then $M$ is a free $S$-module.
\end{lem}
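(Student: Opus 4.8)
The plan is to reduce the freeness of $M$ to a linear-independence statement and then exploit the iterative derivation, together with the embedding of Theorem~\ref{thm:embedding}, to establish that independence. Write $k:=S/\m$ for the residue field and $\bar M:=M/\m M$, a finite-dimensional $k$-vector space. First I would choose elements $m_1,\dots,m_n\in M$ whose residues $\bar m_1,\dots,\bar m_n$ form a $k$-basis of $\bar M$. Since $M$ is finitely generated and $S$ is local, Nakayama's lemma guarantees that $m_1,\dots,m_n$ generate $M$ as an $S$-module. It then suffices to prove that they are $S$-linearly independent, for then they form a basis and $M\isom S^n$ is free.

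The key construction is the ``Taylor map'' attached to the iterative derivation. As in the proof of Theorem~\ref{thm:embedding}, the iterative derivation on $S$ gives an injective ID-homomorphism $\tilde\theta:S\to S[[t]]$, $x\mapsto\sum_{j\geq 0}\theta^{(j)}(x)t^j$, which reduces modulo $\m$ to an injective map $\bar\theta:S\to k[[t]]$. Analogously, $\theta_M$ yields an additive map $\tilde\theta_M:M\to M[[t]]$, $m\mapsto\sum_{j\geq 0}\theta_M^{(j)}(m)t^j$, satisfying $\tilde\theta_M(sm)=\tilde\theta(s)\tilde\theta_M(m)$. Composing with the coefficient-wise reduction $M[[t]]\to\bar M[[t]]$ produces a map $\Psi:M\to\bar M[[t]]$. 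Expressing $\Psi(m_1),\dots,\Psi(m_n)$ in the $k$-basis $\bar m_1,\dots,\bar m_n$ of $\bar M$ gives a matrix $W\in\Mat_{n\times n}(k[[t]])$ whose value at $t=0$ is the identity, because $\theta_M^{(0)}=\id_M$. Hence $\det W$ is a unit of $k[[t]]$, so $W\in\GL_n(k[[t]])$ and the vectors $\Psi(m_1),\dots,\Psi(m_n)$ are $k[[t]]$-linearly independent in $\bar M[[t]]\isom k[[t]]^n$.

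With this fundamental-matrix statement in hand, the independence of the $m_i$ follows quickly. Suppose $\sum_{i=1}^n s_i m_i=0$ with $s_i\in S$. Applying $\tilde\theta_M$ and using $\tilde\theta_M(s_i m_i)=\tilde\theta(s_i)\tilde\theta_M(m_i)$ gives $\sum_i\tilde\theta(s_i)\tilde\theta_M(m_i)=0$ in $M[[t]]$; reducing modulo $\m$ (which is $S[[t]]$-linear and sends $\tilde\theta(s_i)$ to $\bar\theta(s_i)$) yields $\sum_i\bar\theta(s_i)\Psi(m_i)=0$ in $\bar M[[t]]$. By the $k[[t]]$-independence of the $\Psi(m_i)$ we obtain $\bar\theta(s_i)=0$ for every $i$, and injectivity of $\bar\theta$ then forces $s_i=0$. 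Thus $m_1,\dots,m_n$ is an $S$-basis of $M$, and $M$ is free.

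I expect the crux of the argument to be the construction and invertibility of the reduced solution matrix $W$: the whole proof hinges on the observation that reducing the Taylor map modulo $\m$ turns a minimal generating set into vectors that are independent over $k[[t]]$, which rests on $\theta_M^{(0)}=\id_M$ (giving $W\equiv I$ modulo $t$) and on the injectivity of the embedding $\bar\theta$ supplied by Theorem~\ref{thm:embedding}. The remaining verifications — the compatibility $\tilde\theta_M(sm)=\tilde\theta(s)\tilde\theta_M(m)$ and the $S[[t]]$-linearity of reduction — are routine and I would not dwell on them.
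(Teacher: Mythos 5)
Your proof is correct, but it follows a genuinely different mechanism from the paper's. Both arguments begin the same way (lifts of a basis of $M/\m M$ form a minimal generating set by Nakayama, and everything reduces to their $S$-linear independence), but the paper establishes independence by a finite-order contradiction argument: given a nontrivial relation $r_1x_1+\cdots+r_nx_n=0$, ID-simplicity and locality provide a \emph{smallest} $k$ with some $\theta^{(k)}(r_i)\in S^\times$; applying $\theta_M^{(k)}$ and reducing modulo $\m M$ expresses one generator through the others modulo $\m M$, and Nakayama's lemma then contradicts minimality of the generating set. You instead globalize all orders at once: the reduced Taylor map $\Psi:M\to (M/\m M)[[t]]$, the invertibility of your matrix $W$ (forced by $W(0)=\mathds{1}_n$), and the injectivity of $\bar\theta$ supplied by Theorem~\ref{thm:embedding} show directly that every relation has vanishing coefficients, with no contradiction argument needed beyond Nakayama for generation. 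The two proofs are close relatives --- extracting the lowest-order $t$-coefficient of your identity $\sum_i\bar\theta(s_i)\Psi(m_i)=0$ recovers exactly the paper's congruence $\sum_i \theta^{(k)}(r_i)x_i\equiv 0 \pmod{\m M}$ --- but the trade-off is real: the paper's proof is more elementary (one derivation of a single finite order, no power series), while yours is more conceptual and anticipates the paper's later strategy, since your $W$ is precisely a fundamental solution matrix for the ID-module pulled back along $S\hookrightarrow (S/\m)[[t]]$, the same device that underlies Theorem~\ref{thm:trivial-over-ct} and the existence result Theorem~\ref{thm:pv-ring-exists}. In effect you prove the lemma by trivializing $M$ over $(S/\m)[[t]]$ and descending independence, which is perfectly rigorous here because Theorem~\ref{thm:embedding} is proved before this lemma and does not depend on it.
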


\begin{proof}
Let $\{x_1,\dots, x_n\}$ be a minimal set of generators of $M$, and assume that this set is $S$-linearly dependent, i.e.~there are $r_i\in S$ (not all of them equal to $0$) such that $r_1x_1+\cdots +r_nx_n=0$.
Since $S$ is ID-simple, for each $r_i$ there is some $k_i\in \NN_0$ such that $\theta^{(k_i)}(r_i)\not\in \m$, i.e.~$\theta^{(k_i)}(r_i)\in S^\times$. Take $k\in\NN_0$ maximal such that for all $j<k$ and all $i=1,\dots, n$: $\theta^{(j)}(r_i)\in \m$. W.l.o.g. $\theta^{(k)}(r_1)\in S^\times$.
Then one obtains:
\begin{eqnarray*}
0 &=& \theta_M^{(k)}(r_1x_1+\cdots +r_nx_n)=\sum_{i=1}^n \left( \sum_{j=0}^k \theta^{(j)}(r_i)\theta_M^{(k-j)}(x_i) \right) \\
&\equiv & \sum_{i=1}^n \theta^{(k)}(r_i)x_i \mod{\m M} 
\end{eqnarray*}
Since $\theta^{(k)}(r_1)$ is invertible, this implies $x_1\in \gener{x_2,\dots,x_n}+\m M$, hence
$ \gener{x_2,\dots,x_n}+\m M=M$, and by Nakayama's lemma $\gener{x_2,\dots,x_n}=M$ contradicting the assumption that $\{x_1,\dots, x_n\}$ was minimal.

Hence, $\{x_1,\dots, x_n\}$ is linearly independent, and therefore a basis of $M$.
\end{proof}

\begin{thm}\label{thm:M-projective}
If $(M,\theta_M)$ is an ID-module over $(S,\theta)$, then $M$ is a projective $S$-module.
\end{thm}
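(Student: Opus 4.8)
The goal is to prove that any ID-module $M$ over an ID-simple ring $S$ (with field of constants $C$) is projective. Using the remark preceding the statement, it suffices to verify that $M$ is locally free in the weaker sense, i.e.~that for every prime ideal $P \ideal S$ the localisation $M_P = S_P \otimes_S M$ is free over $S_P$. This is precisely where Lemma~\ref{lem:M-free} should come into play, since that lemma establishes freeness of ID-modules over local ID-rings.

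The key step is therefore to show that the localisation $S_P$ is again an ID-simple ring and that $M_P$ carries the structure of an ID-module over $(S_P, \theta)$, so that Lemma~\ref{lem:M-free} applies directly. The iterative derivation extends to localisations by the formula $\theta(\frac{r}{s}) := \theta(r)\theta(s)^{-1}$ given in Section~\ref{sec:notation}, and one checks that $\theta_M$ extends componentwise to $M_P$ via the quotient rule expansion, making $(M_P, \theta_M)$ an ID-module over $S_P$. For ID-simplicity of $S_P$: since $S$ is an integral domain (Proposition~\ref{prop:first-properties}) and $P$ is prime, $S_P$ is a local integral domain, and any nonzero ID-ideal of $S_P$ would contract to a nonzero ID-ideal of $S$, which by ID-simplicity of $S$ must be all of $S$, forcing the ID-ideal of $S_P$ to be the unit ideal. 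This shows $S_P$ is ID-simple. Then Lemma~\ref{lem:M-free}, applied with the local ring $S_P$, yields that $M_P$ is free over $S_P$.

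Once local freeness at every prime holds, the cited characterisation of projective modules over an integral domain (the equivalence of projectivity with weak local freeness, together with Cartier's result removing the finite-presentation hypothesis) immediately gives that $M$ is projective. Since $M$ is finitely generated by the definition of an ID-module and $S$ is an integral domain, all hypotheses of the remark are met.

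The main obstacle I anticipate is the verification that $M_P$ is genuinely an ID-module over the localised ring $S_P$ in the required sense. One must confirm that the extended $\theta_M$ still satisfies the Leibniz-type compatibility $\theta_M(rm) = \theta(r)\theta_M(m)$ and the iteration rule $\theta_M^{(i)}\circ\theta_M^{(j)} = \binom{i+j}{i}\theta_M^{(i+j)}$ after passing to fractions, and that the extension is well-defined independent of the chosen representative $\frac{m}{s}$. These are routine but must be done carefully, since they rely on the quotient-rule expansions for $\theta$ on $S_P$. Everything else—ID-simplicity of $S_P$ and the final invocation of the projectivity criterion—follows cleanly from the results already established in the excerpt.
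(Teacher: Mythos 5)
Your proposal is correct and follows essentially the same route as the paper: localise at each prime $P$, note that $S_P$ is a local ID-simple ring and $M_P$ an ID-module over it, apply Lemma~\ref{lem:M-free} to get freeness of $M_P$, and conclude projectivity via the equivalence recorded in the remark (using Cartier's result to drop finite presentation, since $S$ is a domain). The paper states the localisation facts without proof; your verifications of them (contraction of a nonzero ID-ideal of $S_P$ to a nonzero ID-ideal of $S$, and the routine extension of $\theta_M$ to $M_P$) are correct fillings-in of those details.
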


\begin{proof}
For every prime ideal $P\ideal S$ the localisation $S_P$ is a local ring and an ID-simple ring, and $M_P=S_P\otimes_S M$ is an ID-module over $S_P$.
By the previous lemma, $M_P$ is free for all $P$, i.e.~$M$ is locally free in the weaker sense, hence projective.
\end{proof}

\begin{cor}\label{cor:submodule-is-fin-gen}
Let  $(M,\theta_M)$ be an ID-module over $(S,\theta)$. Then every ID-stable $S$-submodule of $M$ is a finitely generated $S$-module, and hence an ID-submodule of $M$.
\end{cor}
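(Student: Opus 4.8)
The plan is to exploit the fact that the quotient $M/N$ is automatically an ID-module, and then to use Theorem~\ref{thm:M-projective} to split the defining short exact sequence. The finite generation of $N$ will then come for free, because a direct summand of a finitely generated module is finitely generated, with no Noetherian hypothesis required.

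First I would observe that, even though $N$ itself is not yet known to be finitely generated, the quotient $M/N$ certainly is: it is a homomorphic image of the finitely generated $S$-module $M$. Moreover, since $N$ is ID-stable, the iterative derivation $\theta_M$ descends to a well-defined iterative derivation on $M/N$, so $(M/N,\theta_{M/N})$ is genuinely an ID-module over $(S,\theta)$ in the sense of this paper. This is precisely the remark recorded in Section~\ref{sec:notation}, and it is exactly here that the finite generation of the quotient lets us avoid any Noetherian assumption on $S$.

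Next I would apply Theorem~\ref{thm:M-projective} to $M/N$ to conclude that $M/N$ is a projective $S$-module. Projectivity of $M/N$ means that the canonical surjection $\pi\colon M\to M/N$ admits an $S$-linear section, so the short exact sequence
$$0\to N\to M\to M/N\to 0$$
splits and gives an isomorphism $M\isom N\oplus(M/N)$ of $S$-modules.

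Finally, $N$ appears as a direct summand of the finitely generated module $M$, namely as the image of the projection onto the first factor; hence $N$ is itself finitely generated. Being finitely generated and ID-stable, $N$ is an ID-submodule of $M$ by definition, which is the assertion. I do not anticipate a genuine obstacle: the only point that must be handled with care is verifying that $M/N$ qualifies as an ID-module (so that Theorem~\ref{thm:M-projective} is applicable), and this is where the automatic finite generation of the quotient — and thus the avoidance of Noetherianity — does the essential work.
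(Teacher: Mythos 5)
Your proposal is correct and follows exactly the paper's own argument: since $N$ is ID-stable, $M/N$ is an ID-module, hence projective by Theorem~\ref{thm:M-projective}, so the sequence $0\to N\to M\to M/N\to 0$ splits and $N$ is a direct summand (equivalently, a quotient) of the finitely generated module $M$. The only difference is expository: you spell out explicitly why $M/N$ qualifies as an ID-module and why no Noetherian hypothesis is needed, which the paper leaves implicit.
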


\begin{proof}
Let $N$ be an ID-stable $S$-submodule of $M$, then the factor module $M/N$ is again an ID-module, and hence a projective $S$-module by the previous theorem. Therefore, the exact sequence of $S$-modules
$$0\to N \to M\to M/N \to 0$$
splits. Hence, $N$ is isomorphic to a factor module of $M$ and therefore finitely generated.
\end{proof}

\begin{cor}\label{cor:ID-finite-subalgebra}
Let $(R,\theta)$ be an ID-ring extension of $(S,\theta)$. Then the set of elements in $R$ which are ID-finite over $S$ is an $S$-subalgebra of $R$.
\end{cor}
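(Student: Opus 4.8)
The plan is to show that the set $R_f$ of elements of $R$ which are ID-finite over $S$ contains the image of $S$ and is closed under addition and multiplication; closure under multiplication by elements of $S$ is then the special case $s\cdot r$ with $s\in S\subseteq R_f$, so $R_f$ is automatically an $S$-submodule and hence an $S$-subalgebra. For $r\in R$ I write $M_r:=\gener{\theta^{(k)}(r)\mid k\geq 0}_S$ for the $S$-submodule of $R$ it generates, so that by definition $r\in R_f$ exactly when $M_r$ is finitely generated. A short computation using $\theta^{(i)}\circ\theta^{(k)}=\binom{i+k}{i}\theta^{(i+k)}$ shows that $M_r$ is always ID-stable; consequently, whenever $M_r$ is finitely generated it is an ID-module over $S$.

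The engine of the argument is Corollary~\ref{cor:submodule-is-fin-gen}: over the ID-simple ring $S$, every ID-stable $S$-submodule of an ID-module is finitely generated. The strategy for each operation is therefore to exhibit a \emph{finitely generated, ID-stable} $S$-submodule $M$ of $R$ that contains the relevant generators, so that the smaller module sits inside the ID-module $M$ as an ID-stable submodule and is finitely generated by~\ref{cor:submodule-is-fin-gen}. Concretely, $S\subseteq R_f$ because for $s\in S$ the module $M_s\subseteq S$ is an ID-stable $S$-submodule of $S$, i.e.~an ID-ideal, hence $\{0\}$ or $S$ by ID-simplicity, and in either case finitely generated. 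For addition, $\theta^{(k)}(r_1+r_2)=\theta^{(k)}(r_1)+\theta^{(k)}(r_2)$ gives $M_{r_1+r_2}\subseteq M_{r_1}+M_{r_2}$, and the right-hand side is finitely generated and ID-stable. For multiplication, the Leibniz-type formula $\theta^{(k)}(r_1r_2)=\sum_{i+j=k}\theta^{(i)}(r_1)\theta^{(j)}(r_2)$ places $M_{r_1r_2}$ inside the $S$-module generated by all products $m_1m_2$ with $m_i\in M_{r_i}$; this product module is finitely generated by the pairwise products of generators, and it is ID-stable by the same Leibniz rule together with the ID-stability of $M_{r_1}$ and $M_{r_2}$.

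The main obstacle is precisely the non-Noetherian setting: a submodule of a finitely generated $S$-module need not itself be finitely generated, so the inclusions $M_{r_1+r_2}\subseteq M_{r_1}+M_{r_2}$ and $M_{r_1r_2}\subseteq M_{r_1}M_{r_2}$ do not on their own yield ID-finiteness. What rescues the argument is that in every case the ambient module is both ID-stable and finitely generated, hence an ID-module over the ID-simple ring $S$, so that Corollary~\ref{cor:submodule-is-fin-gen} upgrades the inclusion of an ID-stable submodule into finite generation. The only genuine verifications, both routine, are the ID-stability of $M_{r_1}+M_{r_2}$ (immediate from additivity of the $\theta^{(k)}$) and of the product module $M_{r_1}M_{r_2}$ (immediate from the product rule recalled in Section~\ref{sec:notation}).
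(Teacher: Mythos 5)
Your proof is correct and takes essentially the same approach as the paper: both arguments embed $M_{r_1+r_2}$ and $M_{r_1r_2}$ into the finitely generated, ID-stable modules $M_{r_1}+M_{r_2}$ and $M_{r_1}M_{r_2}$ respectively, and then invoke Corollary~\ref{cor:submodule-is-fin-gen} to conclude finite generation of the smaller ID-stable submodules. The only cosmetic difference is that you re-derive $S\subseteq R_f$ directly from ID-simplicity, whereas the paper cites Proposition~\ref{prop:ID-finite} (whose proof is that very argument).
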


\begin{proof}
By Prop.~\ref{prop:ID-finite}, all elements in $S$ are ID-finite over $S$. So it remains to show that for ID-finite elements $x,y\in R$ also $x+y$ and $x\cdot y$ are ID-finite.
Since $x$ and $y$ are ID-finite over $S$, the $S$-modules $\gener{\theta^{(n)}(x) \mid n\in\NN}_S$ and $\gener{\theta^{(n)}(y) \mid n\in\NN}_S$ are finitely generated. But then also 
$M:= \gener{\theta^{(n)}(x) \mid n\in\NN}_S+\gener{\theta^{(n)}(y) \mid n\in\NN}_S$ is finitely generated as well as $N:= \gener{\theta^{(n)}(x)\theta^{(m)}(y) \mid n,m\in\NN}_S$. Therefore, $M$ and $N$ are ID-modules over $S$. Using additivity of the $\theta^{(n)}$ resp.~the generalized Leibniz rule, one obtains that $\gener{\theta^{(n)}(x+y) \mid n\in\NN}_S$ and $\gener{\theta^{(n)}(x\cdot y) \mid n\in\NN}_S$ are ID-stable $S$-submodules of $M$ resp.~of $N$, and hence by Cor.~\ref{cor:submodule-is-fin-gen}, they are both finitely generated as $S$-modules. Therefore $x+y$ and $x\cdot y$ are ID-finite over $S$.
\end{proof}

We end the considerations on the structure of ID-modules by looking at the special case of the ID-simple ring $(S,\theta)=(C[[t]],\theta_t)$ (comp.~Example \ref{ex:ID-rings}\ref{item:der by t}).

\begin{thm}\label{thm:trivial-over-ct}
Let $C$ be a field. Then every ID-module over $(C[[t]],\theta_t)$ is trivial.
\end{thm}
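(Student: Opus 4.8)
The plan is to exhibit a basis of $M$ consisting of constant elements. Since $S=C[[t]]$ is a local ring with maximal ideal $\m=(t)$, Lemma~\ref{lem:M-free} already tells us that $M$ is a free $S$-module; set $n:=\dim_C(M/tM)$, so that $M\isom S^n$ and $M$ is $t$-adically complete. It therefore suffices to produce $n$ constant elements $c_1,\dots,c_n\in C_M$ whose residues form a $C$-basis of $M/tM$: by Nakayama's lemma such elements generate $M$, and $n$ generators of a free module of rank $n$ over a commutative ring automatically form a basis, so they are constant and $M$ is trivial.

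The device that produces constants is a formal ``parallel transport to $t=0$''. For $m\in M$ I would set
$$\epsilon(m):=\sum_{k=0}^\infty \theta_M^{(k)}(m)\,(-t)^k\in M,$$
which converges in the $t$-adic topology because the $k$-th summand lies in $t^kM$ and $M$ is complete. The $k=0$ term is $m$ and all others lie in $tM$, so $\epsilon(m)\equiv m\pmod{tM}$. This is the exact analogue, on the level of $M$, of the identity $\sum_k\theta^{(k)}(f)(-t)^k=f(0)$ valid for $f\in C[[t]]$, i.e.\ of evaluation at the origin.

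The heart of the argument is the claim that $\epsilon(m)$ is constant, i.e.\ $\theta_M^{(k)}(\epsilon(m))=0$ for all $k\ge 1$. I would deduce this from the cocycle form of the iterative axioms: writing $\theta_U(m)=\sum_k\theta_M^{(k)}(m)U^k$ for a formal variable $U$, the relation $\theta_M^{(i)}\circ\theta_M^{(j)}=\binom{i+j}{i}\theta_M^{(i+j)}$ is equivalent to
$$\sum_{i,j\ge 0}\theta_M^{(i)}\bigl(\theta_M^{(j)}(m)\bigr)\,U^i T^j=\sum_{k\ge 0}\theta_M^{(k)}(m)\,(T+U)^k,$$
which I abbreviate as $\theta_U(\theta_T(m))=\theta_{T+U}(m)$. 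Since $\theta_U$ is $\theta$-semilinear with $\theta_U(t)=t+U$, applying $\theta_U$ to $\epsilon(m)=\theta_T(m)|_{T=-t}$ carries the scalar $-t$ to $-(t+U)$ and so amounts to evaluating $\theta_{T+U}(m)$ at $T=-(t+U)$; this gives $\sum_k\theta_M^{(k)}(m)\bigl(-(t+U)+U\bigr)^k=\sum_k\theta_M^{(k)}(m)(-t)^k=\epsilon(m)$. Hence $\theta_U(\epsilon(m))=\epsilon(m)$, so $\epsilon(m)\in C_M$.

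The main obstacle is making the substitution $T\mapsto -(t+U)$ rigorous, since $U$ is not nilpotent: one must check that, after collecting the coefficient of each fixed power $U^p$, the resulting series in $t$ converges $t$-adically, which holds because the relevant powers of $t$ have valuation tending to infinity. Readers preferring to avoid the substitution can instead expand $\theta_U(\epsilon(m))$ directly, collect the coefficient of each $U^k$ with $k\ge 1$, and check that it vanishes by a binomial identity; the cocycle computation above is merely the bookkeeping-free version of this. Finally, choosing lifts $e_1,\dots,e_n\in M$ of a $C$-basis of $M/tM$ and putting $c_i:=\epsilon(e_i)$ yields constants with $c_i\equiv e_i\pmod{tM}$, so by the first paragraph $\{c_1,\dots,c_n\}$ is a basis of constant elements and $M\isom S^n$ as ID-modules.
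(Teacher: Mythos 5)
Your proposal is correct and is essentially the paper's own argument in element-wise rather than matrix form: your operator $\epsilon(m)=\sum_k\theta_M^{(k)}(m)(-t)^k$ applied to a basis $\vect{b}$ is exactly the paper's new basis $\vect{b}\,Y(t)$ with $Y(t)=A(t,-t)$, and your identity $\theta_U(\theta_T(m))=\theta_{T+U}(m)$ specialized at $T=-(t+U)$ is the same cocycle computation the paper performs via $A(t,T+U)=A(t,T)A(t+T,U)$ specialized at $U=-t-T$. The only cosmetic differences are that the paper deduces invertibility of $Y(t)$ from $Y(0)=\mathds{1}_r$ where you instead invoke Nakayama's lemma, and that you make the $t$-adic convergence of the substitution explicit.
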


\begin{proof}
Let $(M,\theta_M)$ be an ID-module over $(C[[t]],\theta_t)$. Since, $C[[t]]$ is a local ring, $M$ is a free $C[[t]]$-module by Lemma \ref{lem:M-free}. Hence, let $\vect{b}=(b_1,\dots, b_r)$ be a basis of $M$ and $A(t,T)\in \Mat_{r\times r}( C[[t]][[T]])$ be such that $\theta_M(\vect{b})= \vect{b}A(t,T)$.\,\footnote{When we apply $\theta$ resp.~$\theta_M$ to a tuple or a matrix, it is meant to apply $\theta$ resp. $\theta_M$ to each entry. Then the equation has to be read as a matrix identity, i.e.~$\theta_M(b_i)=\sum_{j=1}^r b_jA(t,T)_{ji}$ for $A(t,T)_{ji}$ being the $(j,i)$-th entry of $A(t,T)$.}
 Since $\theta^{(0)}=\id_M$, one has $A(t,0)=\mathds{1}_r\in \GL_r(C[[t]])$ which implies that $A(t,T)$ is invertible, i.e.~$A(t,T)\in \GL_r( C[[t]][[T]])$. Therefore, also $Y(t):=A(t,-t)\in \Mat_{r\times r}( C[[t]])$ is invertible, since $Y(0)=A(0,0)=\mathds{1}_r\in \GL_r(C)$. We claim that $\vect{b}Y(t)$ is a basis of constant vectors in $M$, and hence $M\isom S^r$ as ID-modules.\\
Since, $\theta_M(\vect{b}Y(t))=\theta_M(\vect{b})\theta_t(Y(t))=\vect{b}A(t,T)Y(t+T)$, we have to show that
$$Y(t)=A(t,T)Y(t+T).$$
Since the iteration rule $\theta_M^{(i)}\circ \theta_M^{(i)}=\binom{i+j}{i}\theta_M^{(i+j)}$ holds, one has the following commutative diagram
$$\xymatrix{
M \ar[r]^{{}_U\theta_M} \ar[dr]_{{}_{T+U}\theta_M} & M[[U]] \ar[d]^{\theta_M[[U]]} \\
 & M[[T,U]]
}$$
(which indeed is equivalent to the iteration rule; cf.~\cite[\S 27]{hm:crt}). Here ${}_U\theta_M$ and ${}_{T+U}\theta_M$ are the iterative derivations on $M$ with $T$ replaced by $U$ resp.~by $T+U$, i.e.~
${}_U\theta_M:M\xrightarrow{\theta_M} M[[T]] \xrightarrow{T\mapsto U} M[[U]]$. The map $\theta_M[[U]]$ denotes the extension of $\theta_M$ to $M[[U]]$ by applying $\theta_M$ to each coefficient, i.e.~ 
$\theta_M[[U]]\left(\sum_{i=0}^\infty m_iU^i\right):=\sum_{i=0}^\infty \theta_M(m_i)U^i\in M[[T,U]]$.

Applying this to our setting, we obtain
\begin{eqnarray*}
\vect{b}A(t,T+U) &=& {}_{T+U}\theta_M(\vect{b})=\theta_M[[U]]\left({}_U\theta_M(\vect{b})\right)\\
&=& \theta_M[[U]]\left(\vect{b}A(t,U)\right)=\vect{b}A(t,T)A(t+T,U),
\end{eqnarray*}
hence $A(t,T+U)=A(t,T)A(t+T,U)$. Specializing $U$ to $-t-T$, we finally get
$$Y(t)=A(t,-t)=A(t,T)A(t+T,-t-T)=A(t,T)Y(t+T).$$
\end{proof}

\section{Picard-Vessiot rings}

Throughout the section, let $C$ denote an arbitrary field, $(S,\theta)$ an ID-simple ring with $C_S=C$, and let $(M,\theta_M)$ denote an ID-module over $S$.

\begin{defn}\label{def:pv-ring}
A {\markdef solution ring} for $M$ is an ID-simple ring $0\ne (R,\theta_R)$ together with a homomorphism of ID-rings $f:S\to R$ s.t.
\begin{enumerate}
\item[(i)] $C_R=f(C)$,
\item[(ii)] $R\otimes_S M$ is a trivial ID-module over $R$.
\end{enumerate}
A {\markdef Picard-Vessiot ring} (PV-ring) for $M$ is a \emph{minimal} solution ring $0\ne (R,\theta_R)$, i.e.~if $0\ne (\tilde{R},\theta_{\tilde{R}})$ with $\tilde{f}:S\to \tilde{R}$ is another solution ring, then any ID-homomorphism $g:\tilde{R}\to R$ (if it exists) is an isomorphism.
\end{defn}

\begin{rem}
\begin{enumerate}
\item Since the kernel of an ID-homomorphism is an ID-ideal, and $S$ is ID-simple, the homomorphism $f$ is always injective. Therefore, we can view any solution ring $R$ as an extension of $S$, and we will omit the homomorphism $f$.
\item Assume that $M$ is a free $S$-module with basis $\vect{b}=(b_1,\dots, b_r)$, and $R$ is a solution ring for $M$,  then there is a matrix $Y\in \GL_r(R)$ s.t. $\vect{b}Y$ is a basis of constant elements in $R\otimes_S M$. Such a matrix will be called a {\markdef fundamental solution matrix} for $M$ (with respect to $\vect{b}$).
\end{enumerate}
\end{rem}

The next proposition implies that in case of an ID-module $M$ which is free as $S$-module, our definition of PV-ring coincides with the usual one given for example in \cite[Sect.~3]{bhm-mvdp:ideac}  (if the constants are algebraically closed) resp.~in \cite[Def.~2.3]{am:igsidgg}.

\begin{prop}\label{prop:generated-by-fsm}
Assume that $M$ is free as an $S$-module, and let $R$ be a solution ring for $M$. Then there is a unique Picard-Vessiot ring $\tilde{R}$ inside $R$. This is the $S$-subalgebra of $R$ generated by the coefficients of a fundamental solution matrix and the inverse of its determinant.
\end{prop}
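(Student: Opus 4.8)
The plan is to fix a fundamental solution matrix $Y\in\GL_r(R)$ with respect to an $S$-basis $\vect{b}=(b_1,\dots,b_r)$ of $M$, to set $\tilde{R}:=S[Y_{ij},\det(Y)^{-1}]$, and to check separately that $\tilde{R}$ is a solution ring, that it is minimal, and that it is the only Picard--Vessiot ring inside $R$. First I would record the equation governing $Y$. Writing $\theta_M(\vect{b})=\vect{b}A(T)$ with $A(T)\in\Mat_{r\times r}(S[[T]])$, we have $A(0)=\mathds{1}_r$ (because $\theta^{(0)}=\id$), so $A(T)\in\GL_r(S[[T]])$; and the condition that $\vect{b}Y$ be constant reads $\vect{b}Y=\theta(\vect{b}Y)=\vect{b}A(T)\theta_R(Y)$, whence $\theta_R(Y)=A(T)^{-1}Y$ since $\vect{b}$ is an $R$-basis of $R\otimes_S M$. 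Consequently each $\theta_R(Y_{ij})$ and $\theta_R(\det(Y)^{-1})=\det(A(T))\det(Y)^{-1}$ lies in $\tilde{R}[[T]]$, so $\tilde{R}$ is an ID-subring of $R$. As $\det(Y)\in\tilde{R}^{\times}$, the tuple $\vect{b}Y$ is a basis of constants of $\tilde{R}\otimes_S M$, giving condition (ii); and $C\subseteq C_{\tilde{R}}\subseteq C_R=C$ gives condition (i). Thus $\tilde{R}$ is a solution ring provided it is ID-simple.

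Minimality and uniqueness I would obtain from the observation that a fundamental matrix is unique up to right multiplication by $\GL_r(C)$. If $\tilde{R}'$ is a solution ring admitting an ID-homomorphism $g\colon\tilde{R}'\to\tilde{R}$ over $S$, then $g$ is injective (its kernel is an ID-ideal of the ID-simple ring $\tilde{R}'$), and a fundamental matrix $Y'$ of $\tilde{R}'$ is sent to a fundamental matrix $g(Y')$ of $\tilde{R}$. Since the constants of the trivial module $\tilde{R}\otimes_S M$ form a free $C$-module having both $\vect{b}Y$ and $\vect{b}g(Y')$ as bases, we get $g(Y')=YZ$ for some $Z\in\GL_r(C)\subseteq\GL_r(S)$; hence $Y=g(Y')Z^{-1}$ and $\det(Y)^{-1}=\det(Z)\,g(\det(Y')^{-1})$ both lie in $\Ima(g)$, so $g$ is onto and therefore an isomorphism. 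Applying the same computation to the inclusion of an arbitrary solution subring $R_1\subseteq R$ shows $\tilde{R}\subseteq R_1$; in particular, if $R_1$ is itself a Picard--Vessiot ring inside $R$, the inclusion $\tilde{R}\hookrightarrow R_1$ is an ID-homomorphism of solution rings, hence an isomorphism by the minimality of $R_1$, so $R_1=\tilde{R}$. This establishes both that $\tilde{R}$ is minimal and that it is the unique Picard--Vessiot ring inside $R$.

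The step I expect to be the main obstacle is the ID-simplicity of $\tilde{R}$; note that a sub-ID-ring of an ID-simple ring need not be ID-simple. I would first reduce the statement: for any ID-ideal $I\ideal\tilde{R}$ the intersection $I\cap S$ is an ID-ideal of the ID-simple ring $S$, hence $(0)$ or $S$, so it suffices to show that every nonzero ID-ideal $I$ meets $S$ nontrivially (then $1\in I$ and $I=\tilde{R}$). Given $0\neq a\in I$, multiplying by a power of $\det(Y)$ reduces to a nonzero $b\in S[Y_{ij}]\cap I$. Here the relation $\theta_R(Y)=A(T)^{-1}Y$ is decisive: it shows that the $S$-span of the $Y_{ij}$ is a finitely generated ID-stable $S$-module, so the $\theta$-iterates of $b$ remain inside the finitely generated, ID-stable $S$-submodule of $\tilde{R}$ spanned by the monomials in the $Y_{ij}$ of bounded degree, which is projective by Theorem~\ref{thm:M-projective}. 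To extract from this data a nonzero element of $S$ lying in $I$, I would extend $I$ to $R$ --- since $I\,R$ is a nonzero ID-ideal of the ID-simple ring $R$ one gets $I\,R=R$ --- and then descend this unit relation back to $\tilde{R}$ using the comparison isomorphism $\tilde{R}\otimes_S R\isom R\otimes_C N$, where $N:=C_{\tilde{R}\otimes_S R}$ is a finitely generated $C$-algebra: indeed the matrix $Z:=(1\otimes Y)^{-1}(Y\otimes 1)$ is constant and generates $\tilde{R}\otimes_S R$ over $1\otimes R$, so Propositions~\ref{prop:ideal-bijection} and~\ref{prop:constants-of-triv-ext}, applied over the ID-simple ring $R$, put the ID-ideals of $\tilde{R}\otimes_S R$ in bijection with the ideals of $N$. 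Combining this bijection with the multiplication map $\mu\colon\tilde{R}\otimes_S R\to R$ (which sends $Z$ to $\mathds{1}_r$) and the faithful flatness of $N$ over the field $C$ is, I expect, what finally forces $I=\tilde{R}$; making this descent rigorous --- in effect controlling the contraction of the extended ideal --- is the genuinely delicate point, whereas everything else is bookkeeping with fundamental matrices.
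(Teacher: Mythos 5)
Your first two paragraphs are correct and agree with the paper's treatment: the identity $\theta_R(Y)=A(T)^{-1}Y$ makes $\tilde{R}=S[Y_{ij},\det(Y)^{-1}]$ an ID-subring of $R$ with $C_{\tilde R}=C$, and the fact that two fundamental matrices differ by right multiplication by an element of $\GL_r(C)$ gives both minimality and uniqueness inside $R$ (modulo ID-simplicity). But the step you yourself flag as open --- ID-simplicity of $\tilde R$ --- is precisely the mathematical content of the proposition, and your sketch does not close it. Two of the ingredients you propose cannot do the work you assign to them. Extending a proper nonzero ID-ideal $I\ideal\tilde R$ to $IR=R$ and then ``descending the unit relation'' would require faithful flatness of $R$ (or of $\tilde R\otimes_S R$) over $\tilde R$; but faithful flatness of PV-extensions is Corollary~\ref{cor:faithful-flatness}, which is proved \emph{after} and \emph{from} the present proposition (via Theorem~\ref{thm:explicite-pv-ring}), so invoking anything of that kind here is circular, and without it the relation $\mu\bigl(I\cdot(\tilde R\otimes_S R)\bigr)=R$ only says that $I\cdot(\tilde R\otimes_S R)+\Ker(\mu)$ is the unit ideal, which is no contradiction. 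Likewise, ``faithful flatness of $N$ over $C$'' is vacuous --- every nonzero algebra over a field is faithfully flat --- so it cannot force anything.

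What is missing is the idea the paper uses to finish, which compares $\tilde R/I$ with $\tilde R$ itself rather than with $R$. Take $I$ to be a \emph{maximal} ID-ideal of $\tilde R$, so $\tilde R/I$ is ID-simple, hence a domain. Your comparison isomorphism $\tilde R\otimes_S R\isom R\otimes_C N$ together with Proposition~\ref{prop:ideal-bijection} sends $I\cdot(\tilde R\otimes_S R)$ to $R\otimes_C J$ for a \emph{proper} ideal $J\ideal N$ (properness needs an argument: $(\tilde R/I)\otimes_S R\neq 0$ because both factors are domains containing $S$). Choosing a maximal ideal $\m\supseteq J$ of the finitely generated $C$-algebra $N$ and setting $D:=N/\m$, a finite extension of $C$, one obtains an injective ID-homomorphism $\tilde R/I\hookrightarrow R\otimes_C D$ sending $Y$ to $(Y\otimes 1)(1\otimes\bar Z)$ with $\bar Z\in\GL_r(D)$. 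The decisive point is that one can untwist by $\bar Z^{-1}$: the $D$-linear extension $(\tilde R/I)\otimes_C D\to R\otimes_C D$ has image exactly $\tilde R\otimes_C D$, so $\tilde R\otimes_C D$ is a quotient of $(\tilde R/I)\otimes_C D$. Comparing transcendence degrees over $S$ (as the paper does), or equivalently observing that the induced surjective ring endomorphism of the Noetherian ring $\tilde R\otimes_S\Quot(S)\otimes_C D$ must be injective, then forces $I$ to have height $0$, hence $I=0$ since $\tilde R\subseteq R$ is a domain. This dimension comparison is the missing idea; it is not bookkeeping, and none of the tools you list substitutes for it.
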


\begin{proof}
Let $Y\in \GL_r(R)$ be a fundamental solution matrix for $M$ with respect to a basis $\vect{b}$. Then any other fundamental solution matrix for $M$  in $\GL_r(R)$ is obtained as $F\cdot Y\cdot G$ for $F\in \GL_r(S)$ and $G\in \GL_r(C)$ (base change in $M$ resp. $C_{R\otimes_S M}$). Hence, all fundamental solution matrices generate the same $S$-subalgebra $\tilde{R}=S[Y,\det(Y)^{-1}]$ of $R$. And since every solution ring has to contain a fundamental solution matrix, there is no solution ring strictly contained in $\tilde{R}$. It remains to show that $\tR$ is ID-simple.

Let $C[Z,\det(Z)^{-1}]$ be the group ring of $\GL_{r,C}$ equipped with the trivial iterative derivation, and let $S[X,\det(X)^{-1}]$ be an ID-extension of $S$ with $\theta(X):=\theta(Y)Y^{-1}X$, i.e. $X$ is also a fundamental solution matrix for $M$. Then the $R$-linear map
$$\lambda: R\otimes_S S[X,\det(X)^{-1}]\to R\otimes_C C[Z,\det(Z)^{-1}], X\mapsto Y\otimes Z$$
is an ID-isomorphism. Furthermore $\tR\isom S[X,\det(X)^{-1}]/\tilde{I}$ for an appropriate ID-ideal $\tilde{I}$.
By Proposition \ref{prop:ideal-bijection}, $\lambda(R\otimes \tilde{I})=R\otimes \tilde{J}$ for some ideal $\tilde{J}\ideal  C[Z,\det(Z)^{-1}]$. Hence, we obtain an $R$-linear ID-isomorphism
$$\bar{\lambda}:R\otimes_S \tR \to  R\otimes_C C[Z,\det(Z)^{-1}]/\tilde{J}.$$
Now let $I\ideal \tR$ be a maximal ID-ideal, then again by Proposition \ref{prop:ideal-bijection}, there is an ideal \mbox{$J\ideal C[Z,\det(Z)^{-1}]$} containing $\tilde{J}$, s.t. $\bar{\lambda}(R\otimes I)=R\otimes J$, and we again obtain an $R$-linear ID-isomorphism $R\otimes_S (\tR/I)\to R\otimes_C C[Z,\det(Z)^{-1}]/J$.
The restriction of this map to $1\otimes_S (\tR/I)$ therefore, gives an ID-monomorphism
$\tR/I\to  R\otimes_C C[Z,\det(Z)^{-1}]/J$. Choosing a maximal ideal $\m$ of $C[Z,\det(Z)^{-1}]/J$, and letting $D:=(C[Z,\det(Z)^{-1}]/J)/\m$ (a finite extension of $C$), leads to $\tR/I\to  R\otimes_C D$ which is again injective, since $\tR/I$ is ID-simple and the kernel is an ID-ideal. On the other hand, its $D$-linear extension $\tR/I\cdot D\to  R\otimes_C D$ has image $\tR\otimes_C D$ (since $Y=\bar{\lambda}(Y\bar{Z}^{-1})$).
But this means that the transcendence degree of $\tR/I$ over $S$ has to be at least as big as the one of $\tR$ over $S$. Hence, $I$ has to have height $0$, i.e. $I=0$ since, $\tR\subseteq R$ is an integral domain.
\end{proof}

\begin{thm}\label{thm:explicite-pv-ring}
Let $M$ be an ID-module over $S$, $R$ a solution ring for $M$, and $\vect{e}=(e_1,\dots, e_r)$ be an $R$-basis of $R\otimes_S M$ consisting of ID-constant elements. Furthermore,
let $x_1,\dots, x_l\in S$ such that $\gener{x_1,\dots, x_l}_S=S$ and $M[\frac{1}{x_i}]$ is free over $S[\frac{1}{x_i}]$ for all $i=1,\dots, l$.\footnote{The $x_i$ exist, since $M$ is projective by Theorem \ref{thm:M-projective}, hence locally free in the stronger sense.} For all $i$ let $\vect{b_i}$ be a basis of $M[\frac{1}{x_i}]$ over $S[\frac{1}{x_i}]$ consisting of elements in $M$, and $Y_i\in \Mat_{r\times r}(R)$ s.t.~$\vect{b_i}=\vect{e}Y_i$ ($i=1,\dots,l$).
Furthermore, choose $n_i\in\NN$ such that $x_i^{n_i}M\subseteq \gener{\vect{b_i}}_S$.

Then there is a unique Picard-Vessiot ring $\tR$ for $M$ inside $R$, and it is explicitely given by 
$\tR:=S[Y_j, \det(x_j^{n_j}Y_j^{-1})\mid j=1,\dots l]$.
\end{thm}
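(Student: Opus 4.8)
The plan is to reduce everything to the free case handled in Proposition~\ref{prop:generated-by-fsm} by working over the affine cover given by the $x_i$, and then to glue. First I would record that for each $j$ the localisation $R[\frac{1}{x_j}]$ is again ID-simple with constant field $C$: any nonzero ID-ideal of $R[\frac{1}{x_j}]$ contracts, after clearing the denominator, to a nonzero ID-ideal of $R$, which must be all of $R$; and $C\subseteq C_{R[1/x_j]}\subseteq C_{\Quot(R)}=C$ by Proposition~\ref{prop:first-properties}. Since $\vect e$ stays constant, $R[\frac{1}{x_j}]$ is a solution ring for the free module $M[\frac{1}{x_j}]$ over $S[\frac{1}{x_j}]$, and from $\vect{b_j}=\vect e Y_j$ one reads off that $Y_j^{-1}$ is a fundamental solution matrix. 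Proposition~\ref{prop:generated-by-fsm} then identifies the unique PV-ring inside $R[\frac{1}{x_j}]$ as $\tR_j:=S[\frac{1}{x_j}][Y_j^{-1},\det(Y_j)]$, which is in particular ID-simple. Using the adjugate identity $Y_j=\det(Y_j)\cdot\mathrm{adj}(Y_j^{-1})$ together with $\det(Y_j)^{-1}=w_jx_j^{-n_jr}$, where $w_j:=\det(x_j^{n_j}Y_j^{-1})$, I would check that $\tR[\frac{1}{x_j}]=\tR_j$.

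The heart of the matter is to show that $\tR$ is ID-stable, and here the decisive observation concerns the transition matrices. Writing $\vect{b_j}=\vect{b_i}G_{ij}$, the entries of $G_{ij}$ lie in $S[\frac{1}{x_i}]$ --- not merely in $S[\frac{1}{x_ix_j}]$ --- because the $b_{j,t}$ are genuine elements of $M$ and hence have coordinates over $S[\frac{1}{x_i}]$ in the $S[\frac{1}{x_i}]$-basis $\vect{b_i}$; moreover $x_i^{n_i}G_{ij}\in\Mat_{r\times r}(S)$ since $x_i^{n_i}M\subseteq\gener{\vect{b_i}}_S$. Combining $Y_j=Y_iG_{ij}$ with the relation $\theta_M(\vect{b_i})=\vect{b_i}B_i$ (constancy of $\vect e$ forcing $\theta(Y_i)=Y_iB_i$), where $x_i^{n_i}B_i\in\Mat_{r\times r}(S[[T]])$ for the same reason, I would deduce that $\theta(Y_j)$ and $\theta(w_j)$ have all their entries in $\tR[\frac{1}{x_i}][[T]]$, for \emph{every} $i$. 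Since the $x_i$ generate the unit ideal, the structure-sheaf sequence $0\to\tR\to\prod_i\tR[\frac{1}{x_i}]\to\prod_{i,i'}\tR[\frac{1}{x_ix_{i'}}]$ is exact, so each coefficient $\theta^{(k)}(Y_j)$, $\theta^{(k)}(w_j)$ --- lying in every $\tR[\frac{1}{x_i}]$ and in $R$ --- already lies in $\tR$. Thus $\tR$ is a sub-ID-ring of $R$. This gluing step, and in particular the one-sided denominator bound for $G_{ij}$, is the main obstacle; everything else is comparatively formal.

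Next I would verify that $\tR$ is a solution ring. Its constants satisfy $C\subseteq C_{\tR}\subseteq C_R=C$, and $\vect e$ is a constant basis of $\tR\otimes_S M$: it lies in each $\tR[\frac{1}{x_j}]\otimes_S M$ (since $\vect e=\vect{b_j}Y_j^{-1}$ with $Y_j^{-1}\in\Mat_{r\times r}(\tR[\frac{1}{x_j}])$) and generates there, so by the local-global principle for the finitely generated --- indeed projective, by Theorem~\ref{thm:M-projective} --- module $M$ it is a constant $\tR$-basis, trivialising $\tR\otimes_S M$. For ID-simplicity I would argue locally: a nonzero ID-ideal $I\ideal\tR$ survives in each $\tR[\frac{1}{x_j}]=\tR_j$, which is ID-simple, so $x_j^{m_j}\in I$ for some $m_j$; as the $x_j$ generate the unit ideal, so do the $x_j^{m_j}$, whence $I=\tR$.

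Finally, minimality. If $T$ with $S\subseteq T\subseteq R$ is any solution ring, its constant basis differs from $\vect e$ by an element of $\GL_r(C)=\GL_r(C_T)$, so $\vect e$ may be taken as a constant $T$-basis of $T\otimes_S M$. Then $Y_j\in\Mat_{r\times r}(T)$, being the coordinates of $\vect{b_j}\in M$ in the basis $\vect e$, and $x_j^{n_j}Y_j^{-1}\in\Mat_{r\times r}(T)$ as well, since $x_j^{n_j}(Y_j^{-1})_{us}=(x_j^{n_j}b_{j,u}^*)(e_s)$ with the dual functional satisfying $x_j^{n_j}b_{j,u}^*\in\Hom_S(M,S)$ by $x_j^{n_j}M\subseteq\gener{\vect{b_j}}_S$; hence $w_j\in T$ and $\tR\subseteq T$. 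Thus $\tR$ is contained in every solution ring inside $R$ while being one itself, so it is the unique minimal solution ring inside $R$. Any ID-homomorphism $g$ from a further solution ring into $\tR$ is injective (its kernel is an ID-ideal of an ID-simple ring), with image a solution ring inside $\tR$ and therefore equal to $\tR$; so $g$ is an isomorphism and $\tR$ is the asserted Picard-Vessiot ring.
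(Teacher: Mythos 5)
Your proof is correct, and at the strategic level it follows the same route as the paper: reduce to the free case by localising at the $x_i$, identify each $\tR[\frac{1}{x_i}]$ with the local PV-ring of Proposition~\ref{prop:generated-by-fsm} (with $Y_i^{-1}$ as fundamental solution matrix), and glue. The differences are tactical but worth recording. First, where the paper proves $x_j^{n_j}Y_j^{-1}\in\Mat_{r\times r}(R)$ (Step 1) via the auxiliary matrix $\tilde Y$ with $\vect{e}=(\vect{b_1},\dots,\vect{b_l})\tilde Y$ and the identities $\vect{b_i}x_j^{n_j}=\vect{b_j}(x_j^{n_j}Y_j^{-1}Y_i)$, and then obtains minimality (Step 3) by observing that Steps 1--2 ``work for any solution ring'', you derive both from the dual functionals $x_j^{n_j}b_{j,u}^*\in\Hom_S(M,S)$, packaged as a single containment $\tR\subseteq T$ for \emph{every} solution ring $S\subseteq T\subseteq R$; this is a cleaner formulation of the same mechanism, and your final deduction of the PV property (injectivity from ID-simplicity, surjectivity from the containment applied to the image) is actually spelled out more completely than the paper's rather terse Step 3, which is phrased as if a PV-ring inside $R$ were already known to exist. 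Second, you prove explicitly that $\tR$ is stable under $\theta$, using the transition matrices $G_{ij}$ over $S[\frac{1}{x_i}]$ (with $x_i^{n_i}G_{ij}\in\Mat_{r\times r}(S)$) and the exact sheaf sequence for the cover by the $x_i$. The paper never addresses this point, although it is silently needed even to state that ``$\tR$ is ID-simple, since all the localisations $\tR[\frac{1}{x_i}]$ are ID-simple''; the implicit argument there is the same gluing $\tR=\bigcap_i\tR[\frac{1}{x_i}]$ with each localisation an ID-ring by Proposition~\ref{prop:generated-by-fsm}, so your write-up fills a gap left to the reader. The remaining steps --- ID-simplicity by localisation, the constants, $\vect{e}$ trivialising $\tR\otimes_S M$ by a local-global argument rather than the paper's partition-of-unity computation $\vect{e}=\sum_j\vect{b_j}(x_j^{n_j}Y_j^{-1})a_j$, and the uniqueness bookkeeping --- agree with the paper's in substance.
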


\begin{proof}
First at all, since $\gener{x_1,\dots, x_l}_S=S$ and therefore $\gener{x_1^{n_1},\dots, x_l^{n_l}}_S=S$, there exist $a_1,\dots, a_l\in S$ s.t.~$\sum_{i=1}^l a_ix_i^{n_i}=1$. This also implies that $\vect{b_1}\cup\dots \cup \vect{b_l}$ is a set of generators for $M$.
Hence, $\vect{b_1}\cup\dots \cup \vect{b_l}$ is a set of generators for $R\otimes_S M$, and there also is a matrix $\tilde{Y}\in \Mat_{rl\times r}(R)$ s.t.~$\vect{e}=(\vect{b_1},\dots, \vect{b_l})\tilde{Y}$.
The proof now procedes in three steps:

\textbf{Step 1:} We show that $\tR=S[Y_j, \det(x_j^{n_j}Y_j^{-1})\mid j=1,\dots l]\subseteq R$:\\
Since $x_j^{n_j}M\subseteq \gener{\vect{b_j}}_S$ and $\vect{b_i}x_j^{n_j}=\vect{b_j}x_j^{n_j}Y_j^{-1}Y_i$, the matrix $(x_j^{n_j}Y_j^{-1}Y_i)$ has coefficients in $S$ for all $i,j$. Then
\begin{eqnarray*}
\vect{e}x_j^{n_j}&=&(\vect{b_1},\dots, \vect{b_l})\tilde{Y}x_j^{n_j} =(\vect{b_1}x_j^{n_j},\dots, \vect{b_l}x_j^{n_j})\tilde{Y}\\
&=& \vect{b_j}(x_j^{n_j}Y_j^{-1}Y_1,\dots, x_j^{n_j}Y_j^{-1}Y_l)\tilde{Y} \in \gener{\vect{b_j}}_R
\end{eqnarray*}
and $\vect{e}x_j^{n_j}=\vect{b_j}(x_j^{n_j}Y_j^{-1})$. Therefore, $x_j^{n_j}Y_j^{-1}\in \Mat_{r\times r}(R)$. Therefore we obtain $\tR=S[Y_j, \det(x_j^{n_j}Y_j^{-1})]\subseteq R$.

\textbf{Step 2:} $\tR$ is a solution ring for $M$:\\
$\tR$ is ID-simple, since all the localisations $\tR[\frac{1}{x_i}]$ are  ID-simple by the consideration of the special case of a free ID-module, because they are just Picard-Vessiot rings for the free $S[\frac{1}{x_i}]$-modules $M[\frac{1}{x_i}]$. ($Y_i^{-1}$ is a fundamental solution matrix for $M[\frac{1}{x_i}]$.)\\
Furthermore, $\tR\otimes_S M$ contains the basis $\vect{e}$, since
$$\vect{e}=\vect{e}\cdot \sum_{j=1}^l a_jx_j^{n_j}=\sum_{j=1}^l \vect{b_j}(x_j^{n_j}Y_j^{-1})a_j\in \gener{\vect{b_1}\cup \dots\cup \vect{b_l}}_{\tR}=\tR\otimes_S M.$$
Hence, $\tR\otimes_S M$ is a trivial ID-module and therefore $\tR$ is a solution ring for~$M$.

\textbf{Step 3:} $\tR$ is a Picard-Vessiot ring for $M$, and the unique one inside $R$:\\
The steps 1 and 2 work for any solution ring $R$, in particular for a Picard-Vessiot ring $R'\subseteq R$. In this case, by minimality of $R'$, and $\tR\subseteq R'$, we obtain that $R'=\tR$. Therefore $\tR$ is a Picard-Vessiot ring, and the unique one inside $R$.
\end{proof}

\begin{cor}\label{cor:faithful-flatness}
Let  $(R,\theta_R)$ be a Picard-Vessiot ring for $M$. Then:
\begin{enumerate}
\item[(a)] All $r\in R$ are ID-finite over $S$.
\item[(b)] $R/S$ is faithfully flat.
\end{enumerate}
\end{cor}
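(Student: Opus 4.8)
The plan is to treat the two assertions separately, deducing (a) from the explicit description $R=\tR$ furnished by Theorem~\ref{thm:explicite-pv-ring} and then bootstrapping (b) out of (a). For (a), Theorem~\ref{thm:explicite-pv-ring} presents $R=\tR$ as the $S$-algebra generated by the entries of the matrices $Y_j$ and the elements $\det(x_j^{n_j}Y_j^{-1})$, $j=1,\dots,l$. Since the ID-finite elements of $R$ over $S$ form an $S$-subalgebra by Corollary~\ref{cor:ID-finite-subalgebra}, it suffices to prove that each of these finitely many generators is ID-finite.

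The heart of (a) is a \emph{read-off-coordinates} principle that I would isolate: if $P$ is any ID-module over $S$ with $R\otimes_S P$ trivial, say with constant $R$-basis $\vect{f}=(f_1,\dots,f_d)$, and $p\in P$ is written $1\otimes p=\sum_m f_m c_m$ with $c_m\in R$, then every $c_m$ is ID-finite over $S$. To prove this I set $N:=\gener{\theta_P^{(n)}(p)\mid n\in\NN}_S\subseteq P$, which is ID-stable and hence a \emph{finitely generated} ID-submodule by Corollary~\ref{cor:submodule-is-fin-gen}; this is exactly where the non-Noetherian subtlety is absorbed. Writing each generator $w$ of $N$ as $1\otimes w=\sum_m f_m v_m(w)$, the finitely generated $S$-module $W$ spanned by all the $v_m(w)$ is ID-stable: applying $\theta^{(n)}$ to $1\otimes w=\sum_m f_m v_m(w)$ and using that the $f_m$ are constant yields $\theta_R^{(n)}(v_m(w))=v_m(\theta_P^{(n)}(w))$, and $\theta_P^{(n)}(w)\in N$ expands $S$-linearly in the generators, so the right-hand side stays in $W$. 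Thus $W$ is a finitely generated ID-stable $S$-submodule of $R$, i.e.\ an ID-module, and it contains each $c_m$ because $p\in N$. Consequently $\gener{\theta_R^{(n)}(c_m)\mid n}_S$ is an ID-stable submodule of the ID-module $W$, finitely generated again by Corollary~\ref{cor:submodule-is-fin-gen}, so $c_m$ is ID-finite.

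Applying this with $P=M$ and $p=b_{j,k}$ shows that every entry of $Y_j$ (the $\vect{e}$-coordinate of $b_{j,k}$) is ID-finite. The only genuinely delicate generator is $\det(x_j^{n_j}Y_j^{-1})$, since it involves an inverse while ID-finite elements form only a \emph{subalgebra}, not a subfield; this is the main obstacle in (a). I would resolve it via the dual ID-module $M^{*}=\Hom_S(M,S)$, which is again finitely generated and projective (as $M$ is projective by Theorem~\ref{thm:M-projective}) and satisfies $R\otimes_S M^{*}\isom(R\otimes_S M)^{*}$, hence is trivial with constant basis $\vect{e}^{*}$ dual to $\vect{e}$. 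From $x_j^{n_j}M\subseteq\gener{\vect{b_j}}_S$ one checks that $x_j^{n_j}b_{j,k}^{*}$ already lies in $M^{*}$ (not merely in its localisation) and that its $\vect{e}^{*}$-coordinates are precisely the entries of the $k$-th row of $x_j^{n_j}Y_j^{-1}$. The coordinate principle then makes all entries of $x_j^{n_j}Y_j^{-1}$ ID-finite, whence so is their determinant $\det(x_j^{n_j}Y_j^{-1})$ by Corollary~\ref{cor:ID-finite-subalgebra}. Every algebra generator of $R$ being ID-finite, Corollary~\ref{cor:ID-finite-subalgebra} gives that all of $R$ is ID-finite over $S$, proving (a).

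For (b) I would use (a) to exhibit $R$ as a well-behaved colimit. By (a) every $r\in R$ lies in the finitely generated ID-stable $S$-submodule $\gener{\theta_R^{(n)}(r)\mid n}_S+S$; these submodules are directed under inclusion and exhaust $R$, so $R=\varinjlim_i W_i$ with each $W_i\supseteq S$ a finitely generated ID-stable $S$-submodule, hence an ID-module, hence \emph{projective} by Theorem~\ref{thm:M-projective}. A filtered colimit of projective modules is flat, so $R$ is flat over $S$. The extra content beyond flatness — the faithful part — I would obtain by noting that $S\subseteq W_i$ is an ID-submodule, so each $W_i/S$ is again an ID-module and therefore projective, whence $R/S=\varinjlim_i W_i/S$ is likewise a filtered colimit of projectives and so flat over $S$. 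Flatness of $R/S$ forces $\mathrm{Tor}_1^S(N,R/S)=0$ for every $S$-module $N$, so that the inclusion $N=N\otimes_S S\hookrightarrow N\otimes_S R$ coming from $0\to S\to R\to R/S\to 0$ is injective for all $N$. Hence $N\otimes_S R=0$ implies $N=0$, which together with flatness of $R$ yields that $R$ is faithfully flat over $S$. The two points requiring care are thus the inverse-of-determinant generator in (a), handled through the dual module $M^{*}$, and isolating the flatness of $R/S$ in (b) as the precise input that upgrades flatness to \emph{faithful} flatness.
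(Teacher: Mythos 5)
Your proposal is correct, and it reaches the result by a genuinely different route than the paper, so a comparison is worthwhile. Both proofs start identically: Thm.~\ref{thm:explicite-pv-ring} plus Cor.~\ref{cor:ID-finite-subalgebra} reduce (a) to the finitely many generators, and (b) is deduced from (a) by writing $R$ as a filtered union of ID-modules, which are projective by Thm.~\ref{thm:M-projective}. The divergence is in how the generators are handled in (a). The paper argues directly: since $\vect{b_1}\cup\dots\cup\vect{b_l}$ generates $M$, one has $\theta(Y_1,\dots,Y_l)=(Y_1,\dots,Y_l)\cdot A$ with $A$ over $S[[T]]$, which settles the entries of the $Y_j$; for the determinant it computes in each localisation $S[\frac{1}{x_i}]$ that $\theta(\det(x_j^{n_j}Y_j^{-1}))\in \det(x_j^{n_j}Y_j^{-1})\cdot (S[\frac{1}{x_i}])[[T]]$, and intersects over $i$ to conclude that every $\theta^{(n)}$ of the determinant is an $S$-multiple of the determinant itself. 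Your ``read-off-coordinates'' principle (which is sound: the identity $\theta_R^{(n)}(v_m(w))=v_m(\theta_P^{(n)}(w))$ and two applications of Cor.~\ref{cor:submodule-is-fin-gen} are exactly right) gives a uniform, more conceptual treatment, and your dual-module device for the inverse-determinant generator is a clean way around the fact that ID-finite elements only form a subalgebra. The cost is that the paper never develops dual ID-modules: a complete write-up along your lines must construct the iterative derivation on $M^{*}$ (e.g.\ via $\theta_{M^*}^{(n)}(\phi)(m)=\sum_{i+j=n}(-1)^j\theta_S^{(i)}(\phi(\theta_M^{(j)}(m)))$, checking $S$-linearity and iterativity), prove $R\otimes_S M^{*}\isom (R\otimes_S M)^{*}$ as ID-modules for $M$ finitely generated projective, and verify that dual bases of constant bases are constant; all standard, but genuine additional work that the paper's localisation trick avoids. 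In (b) the two arguments share the decisive observation that $S\cdot 1$ is an ID-submodule, so the quotient is an ID-module, hence projective, and $S\hookrightarrow N$ splits; the paper converts this into a direct contradiction with $\m R=R$, whereas you convert it into flatness of $R/S$ and then purity of $S\to R$. Your packaging is slightly more structural and isolates the statement that $R/S$ is $S$-flat, which is of independent interest; the paper's is shorter and avoids Tor.
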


\begin{proof}
\textbf{(a)} By the previous theorem, $R=S[Y_j, \det(x_j^{n_j}Y_j^{-1})\mid j=1,\dots l]$, using the same notation as in the theorem. By Cor.~\ref{cor:ID-finite-subalgebra}, the set of ID-finite elements is a subalgebra of $R$, so we only have to show that the  entries of $Y_j$ and $\det(x_j^{n_j}Y_j^{-1})$ are ID-finite over $S$:\\
Since $\vect{b_1}\cup\dots \cup \vect{b_l}$ is a generating set of $M$, there is a matrix $A\in \Mat_{rl\times rl}(S[[T]])$ s.t.~
$\theta(\vect{b_1},\dots, \vect{b_l})=(\vect{b_1},\dots, \vect{b_l})\cdot A$. Hence,
$$\vect{e}\theta(Y_1,\dots, Y_l)=\theta(\vect{b_1},\dots, \vect{b_l})= \vect{e}(Y_1,\dots, Y_l)\cdot A,$$
which implies $\theta(Y_1,\dots, Y_l)=(Y_1,\dots, Y_l)\cdot A$.
Therefore, all entries of the $Y_j$ are ID-finite over $S$.\\
In the local case over $S[\frac{1}{x_i}]$, we get $\theta(Y_i)=Y_i A_i$ for some $A_i\in  \Mat_{r\times r}((S[\frac{1}{x_i}])[[T]])$, and hence $\theta(\det(Y_i))\in \det(Y_i)\cdot (S[\frac{1}{x_i}])[[T]]$, as well as
$\theta(\det(Y_i^{-1}))\in \det(Y_i^{-1})\cdot (S[\frac{1}{x_i}])[[T]]$.

Since $(x_j^{n_j}Y_j^{-1}Y_i)$ has coefficients in $S$ for all $i,j$ (comp.~proof of Thm.~\ref{thm:explicite-pv-ring}), we obtain that
$\det(x_j^{n_j}Y_j^{-1})=\det(x_j^{n_j}Y_j^{-1}Y_i)\det(Y_i^{-1})$ fulfills 
$\theta(\det(x_j^{n_j}Y_j^{-1}))\in \det(x_j^{n_j}Y_j^{-1})\cdot (S[\frac{1}{x_i}])[[T]]$ for all $i$, and hence
 $\theta(\det(x_j^{n_j}Y_j^{-1}))\in \det(x_j^{n_j}Y_j^{-1})\cdot S[[T]]$. Therefore, $\det(x_j^{n_j}Y_j^{-1})$ is also ID-finite over $S$.

\medskip

\textbf{(b)}
By part (a), all elements in $R$ are ID-finite over $S$ and therefore, $R$ is the union of ID-modules over $S$. In particular, $R$ is the union of projective (hence flat) $S$-modules.
Therefore, $R$ is also a flat $S$-module.

Assume $R/S$ is not faithfully flat. Then there is a maximal ideal $\m\ideal S$ such that $\m R=R$. Hence, there exist $a_i\in \m$, $r_i\in R$ s.t.~$1=\sum_{i=1}^k a_ir_i$. Since, all elements of $R$ are ID-finite, there is an ID-$S$-module $N\subseteq R$ containing all $r_i$. The $S$-submodule $S\cdot 1\subseteq N$ is an ID-submodule (since $1$ is constant). As $N/(S\cdot 1)$ is an ID-module over $S$ and all ID-modules are projective, $S\cdot 1$ is a direct summand of $N$ as $S$-modules. This however contradicts $1\in \m N$.
\end{proof}

\subsection{Existence and uniqueness of Picard-Vessiot rings} \

In Picard-Vessiot theory of ID-modules over ID-fields, it is well known that a Picard-Vessiot ring exists and is unique up to ID-isomorphism if the field of constants is algebraically closed (cf.~\cite[Lemma 3.4]{bhm-mvdp:ideac} resp.~\cite[Prop.~1.20]{mvdp-mfs:gtlde}). If the field of constants is not algebraically closed a Picard-Vessiot ring may not exist (see \cite{as:cpvthlde}) and if it exists, it may not be unique (cf. \cite{td:tipdgtfrz}). But in \cite{td:tipdgtfrz}, Dyckerhoff also gave a criterion for the existence in characteristic zero. This is about the same criterion as we give in Thm.~\ref{thm:pv-ring-exists}. Our proof however is different, and works in arbitrary characteristic.

The existence and uniqueness result for algebraically closed constants is also present in our situation:

\begin{thm}\label{thm:c-alg-closed}
Let $M$ be an ID-module over $S$. If the constants $C$ of $S$ are algebraically closed, then there exists a Picard-Vessiot ring for $M$ and it is unique up to ID-isomorphism.
\end{thm}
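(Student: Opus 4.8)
The plan is to prove existence and uniqueness separately, reducing both to the case of a free ID-module so that the explicit constructions of Theorem~\ref{thm:explicite-pv-ring} and Proposition~\ref{prop:generated-by-fsm} become available. For existence, I would first build \emph{some} solution ring, then invoke the minimality argument of Theorem~\ref{thm:explicite-pv-ring} to extract a Picard-Vessiot ring from it. The natural candidate for an initial solution ring is a ``universal'' one: since $M$ is projective by Theorem~\ref{thm:M-projective}, I can choose $x_1,\dots,x_l\in S$ generating the unit ideal with each $M[\frac{1}{x_i}]$ free. Writing down a generic fundamental solution matrix of indeterminates $X$ (with $\det(X)$ inverted) and equipping $S[X,\det(X)^{-1}]$ with the iterative derivation forced by the equation $\theta(\vect{b})=\vect{b}A$ that $M$ satisfies, I obtain an ID-extension of $S$ over which $M$ becomes trivial. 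This ring is not ID-simple, so I would quotient by a maximal ID-ideal $\m$; the quotient $R_0:=S[X,\det(X)^{-1}]/\m$ is then ID-simple, and the real work is to verify condition~(i) of Definition~\ref{def:pv-ring}, namely $C_{R_0}=C$.

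The step I expect to be the main obstacle is exactly this control of the constants, and it is precisely here that algebraic closedness of $C$ enters. The idea is to apply Proposition~\ref{prop:ideal-bijection}: over the group ring $C[Z,\det(Z)^{-1}]$ of $\GL_{r,C}$ with trivial derivation, the ID-ideals of $R_0\otimes_C C[Z,\det(Z)^{-1}]$ correspond bijectively to ordinary ideals of $C[Z,\det(Z)^{-1}]$, and by Proposition~\ref{prop:constants-of-triv-ext} the constants of a trivial extension $R_0\otimes_C D$ are exactly $1\otimes D$. The maximal ID-ideal $\m$ gives, after base change, a maximal ideal of $C[Z,\det(Z)^{-1}]$; since $C$ is algebraically closed, by the Nullstellensatz the corresponding residue field is $C$ itself (a $C$-rational point of $\GL_{r,C}$). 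Tracking the constants through this correspondence forces any constant of $R_0$ to land in $C$, giving $C_{R_0}=C$. Thus $R_0$ is a genuine solution ring, and Theorem~\ref{thm:explicite-pv-ring} produces a Picard-Vessiot ring $R\subseteq R_0$.

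For uniqueness, suppose $R_1,R_2$ are two Picard-Vessiot rings for $M$. I would form the tensor product $R_1\otimes_S R_2$, which is an ID-ring extension of both, and pass to the quotient $R_1\otimes_S R_2/\m$ by a maximal ID-ideal to obtain an ID-simple ring $U$. Using the correspondence of Proposition~\ref{prop:ideal-bijection} as above—comparing fundamental solution matrices coming from $R_1$ and from $R_2$, which differ by a matrix over the constants—and again invoking algebraic closedness to ensure $C_U=C$, I get that $U$ is a solution ring receiving ID-homomorphisms from both $R_1$ and $R_2$. The minimality in Definition~\ref{def:pv-ring} then forces both induced maps $R_i\to U$ to be isomorphisms, whence $R_1\cong R_2$ as ID-rings. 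The delicate point throughout is that every time a quotient by a maximal ID-ideal is taken, one must re-verify that no new constants are created, and the hypothesis that $C$ is algebraically closed is what guarantees this by pinning down the relevant residue fields to $C$ via a rational point.
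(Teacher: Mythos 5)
Your overall skeleton --- construct some solution ring, then extract a Picard-Vessiot ring from it via Theorem~\ref{thm:explicite-pv-ring} --- is the same as the paper's; the difference is where the solution ring comes from. The paper takes the PV-ring for $\Quot(S)\otimes_S M$ over the ID-\emph{field} $\Quot(S)$, quoting \cite[Lemma~3.4]{bhm-mvdp:ideac}, whereas you build one from scratch as $R_0=S[X,\det(X)^{-1}]/\m$ with $\m$ a maximal ID-ideal. It is exactly at what you call the ``main obstacle'', the verification $C_{R_0}=C$, that your argument is circular. Proposition~\ref{prop:ideal-bijection} classifies the ID-ideals of $A\otimes_{C_A}D$ only when $A$ is ID-simple and $D$ is finitely generated over the \emph{constants} $C_A$ of $A$. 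So to read off ideals of $C[Z,\det(Z)^{-1}]$ from ID-ideals of $R_0\otimes_C C[Z,\det(Z)^{-1}]$ you must already know $C_{R_0}=C$: if $C':=C_{R_0}$ were larger, the proposition only identifies these ID-ideals with ideals of $C'\otimes_C C[Z,\det(Z)^{-1}]$, whose maximal ideals have residue fields finite over $C'$, and the Nullstellensatz over $C$ pins down nothing. (You also cannot base-change along $S[X,\det(X)^{-1}]$ itself, since that ring is not ID-simple, so its tensor products are not covered by Proposition~\ref{prop:ideal-bijection} either.) At this stage of the existence proof there is no ring with known constants $C$ to anchor such a correspondence --- that is precisely what is being constructed. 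The missing ingredient is the genuine lemma behind \cite[Lemma~3.4]{bhm-mvdp:ideac} (cf.\ also \cite[Lemma~1.17]{mvdp-mfs:gtlde}): an ID-simple ring finitely generated over $\Quot(S)$ has constants $C$ when $C$ is algebraically closed. Its proof is of a different nature: every $c-\lambda$ with $\lambda\in C$ is a unit by ID-simplicity, a Chevalley/Nullstellensatz argument over the field $\Quot(S)$ (not over $C$) rules out $c$ transcendental, and for $c$ algebraic the minimal polynomial has constant coefficients, so algebraic closedness forces $c\in C$. Without this lemma (or the citation) your existence proof does not close. A minor further point: for non-free $M$ there is no global basis $\vect{b}$, so $S[X,\det(X)^{-1}]$ with derivation forced by $\theta(\vect{b})=\vect{b}A$ is not even defined; the harmless fix is to work over a single localisation $S[\frac{1}{x_1}]$, since any solution ring for $M[\frac{1}{x_1}]$ over $S[\frac{1}{x_1}]$ is automatically one for $M$ over $S$.

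Your uniqueness argument is in better shape, precisely because there the constants are known at the outset: $R_1$ is a PV-ring, so $C_{R_1}=C$ by definition, Theorem~\ref{thm:torsor-isomorphism} gives $R_1\otimes_S R_2\cong R_1\otimes_C D$ with $D=C_{R_1\otimes_S R_2}$ finitely generated over $C$, and now Proposition~\ref{prop:ideal-bijection} legitimately converts a maximal ID-ideal into a maximal ideal $\mathfrak{n}\ideal D$ with $D/\mathfrak{n}=C$ by the Nullstellensatz; hence your $U\cong R_1$ as ID-rings. However, your final step misreads Definition~\ref{def:pv-ring}: minimality makes ID-homomorphisms \emph{into} a PV-ring isomorphisms, not homomorphisms out of it (a PV-ring embeds properly into large solution rings such as $C[[t]]$, so ``maps out'' are certainly not isomorphisms in general), and your maps $R_i\to U$ point out of the PV-rings. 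The repair is immediate: the composite $R_2\to U\cong R_1$ is an ID-homomorphism from the solution ring $R_2$ into the PV-ring $R_1$, hence an isomorphism by Definition~\ref{def:pv-ring}; alternatively, invoke the uniqueness of the PV-ring inside the solution ring $U$ from Theorem~\ref{thm:explicite-pv-ring}. In this repaired form your uniqueness proof is essentially the paper's alternative remark that uniqueness also follows from Theorem~\ref{thm:the-scheme-isom}, because $\uIsom_S^{ID}(R,R')$ has a $C$-rational point when $C$ is algebraically closed.
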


\begin{proof}
By \cite[Lemma 3.4]{bhm-mvdp:ideac} there exists a PV-ring $\tR$ for $\Quot(S)\otimes_S M$ over $\Quot(S)$. In particular, $\tR$ is a solution ring for $M$. Hence by Thm.~\ref{thm:explicite-pv-ring}, there is a PV-ring $R$ for $M$ inside $\tR$. Uniqueness of the PV-ring $R$ can be seen as follows: For a PV-ring $R$, the ID-ring $\tR:=\Quot(S)\otimes_S R$ is a PV-ring for $\Quot(S)\otimes_S M$ over $\Quot(S)$. Since $\tR$ is unique for $\Quot(S)\otimes_S M$ up to ID-isomorphism by  \cite[Lemma 3.4]{bhm-mvdp:ideac}, and $R$ is unique inside $\tR$ by Thm.~\ref{thm:explicite-pv-ring}, $R$ is the unique PV-ring for $M$ up to ID-isomorphism.\\
The uniqueness could also be deduced directly from Theorem \ref{thm:the-scheme-isom}, since for algebraically closed constants $C$, the affine scheme $\uIsom_S^{ID}(R,R')$ always has a $C$-rational point ($R$ and $R'$ being PV-rings for $M$).
\end{proof}

\begin{thm}\label{thm:pv-ring-exists}
Let $S$ have a maximal ideal $\m$ satisfying $S/\m\isom C=C_S$. Then for any ID-module $M$ over $S$ there exists a Picard-Vessiot ring $R$ for $M$.
\end{thm}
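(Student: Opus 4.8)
The plan is to reduce the existence of a Picard-Vessiot ring to the existence of a single solution ring, and then to produce a concrete solution ring out of the hypothesis. Indeed, once we exhibit \emph{any} solution ring $R$ for $M$, Theorem~\ref{thm:explicite-pv-ring} immediately yields a (unique) Picard-Vessiot ring for $M$ sitting inside $R$, so no minimality has to be arranged by hand. Thus the whole task becomes: build an ID-simple extension of $S$ with the same constants over which $M$ becomes trivial.

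For this I would use the embedding theorem together with the structure result over power series rings. The hypothesis says that $\m$ is a $C$-rational point, i.e.~the natural composite $C=C_S\hookrightarrow S\twoheadrightarrow S/\m$ is an isomorphism. By Theorem~\ref{thm:embedding} the iterative derivation furnishes an ID-embedding $f\colon S\hookrightarrow (S/\m)[[t]]=C[[t]]$, where $C[[t]]$ carries the iterative derivation $\theta_t$ (Example~\ref{ex:ID-rings}\ref{item:der by t}). The ring $C[[t]]$ is ID-simple and nonzero, and the base change $C[[t]]\otimes_S M$ is again a finitely generated ID-module, now over $C[[t]]$; by Theorem~\ref{thm:trivial-over-ct} it is therefore trivial. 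This settles condition (ii) of Definition~\ref{def:pv-ring} for the candidate solution ring $C[[t]]$.

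The step that actually uses the rational-point hypothesis, and which I expect to be the only genuine point to check, is condition (i), namely $C_{C[[t]]}=f(C)$. The constants of $C[[t]]$ are exactly the constant power series $C$, while $f$ carries the constants $C_S$ of $S$ to their images in $S/\m\subseteq C[[t]]$ (an ID-homomorphism maps constants to constants). The rational-point assumption is precisely what guarantees that this image is all of $C=C_{C[[t]]}$, rather than a proper subfield; had we only used an arbitrary maximal ideal with larger residue field, condition (i) would fail and $C[[t]]$ would acquire new constants. With (i) and (ii) in place, $(C[[t]],f)$ is a solution ring for $M$, and Theorem~\ref{thm:explicite-pv-ring} produces the desired Picard-Vessiot ring $R\subseteq C[[t]]$.
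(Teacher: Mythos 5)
Your proposal is correct and follows essentially the same route as the paper: embed $S$ into $(C[[t]],\theta_t)$ via Theorem~\ref{thm:embedding}, observe that $C[[t]]$ is ID-simple with constants $C$ and that $C[[t]]\otimes_S M$ is trivial by Theorem~\ref{thm:trivial-over-ct}, hence $C[[t]]$ is a solution ring, and then extract the Picard-Vessiot ring inside it via Theorem~\ref{thm:explicite-pv-ring}. Your explicit verification of condition (i) of Definition~\ref{def:pv-ring} -- that the rational-point hypothesis is exactly what makes $f(C_S)$ all of $C_{C[[t]]}$ -- is a point the paper passes over silently, and it is a worthwhile clarification, but it does not change the argument.
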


\begin{proof}
Since $S$ is ID-simple and has a maximal ideal $\m$ satisfying $S/\m\isom C$, $(S,\theta)$ can be embedded into $(C[[t]],\theta_t)$  by Theorem \ref{thm:embedding}.  Since $C[[t]]$ is ID-simple, has constants $C$ (comp.~Ex.~\ref{ex:ID-rings}\ref{item:der by t}) and $C[[t]]\otimes_S M$ is a trivial ID-module by Theorem \ref{thm:trivial-over-ct},
 $C[[t]]$ therefore is a solution ring for $M$. Hence by Theorem \ref{thm:explicite-pv-ring}, there exists a Picard-Vessiot ring for $M$ (even unique inside $C[[t]]$ with respect to this given embedding of $S$ in $C[[t]]$).
\end{proof}

\section{The differential Galois group scheme}

In this section we establish the Galois correspondence for a Picard-Vessiot extension analogous to the classical ones. The ideas are the same as in \cite[Sect.~2]{td:tipdgtfrz} resp.~in \cite[Sect.~10/11]{am:gticngg}. But we have to do a bit more work, since our modules are not free.

\smallskip

As in the previous section, let $C$ denote an arbitrary field, $(S,\theta)$ an ID-simple ring with $C_S=C$, and let $(M,\theta_M)$ denote an ID-module over $S$.

\begin{thm}\label{thm:torsor-isomorphism}
Let $R'$ be a solution ring for $M$ and $R$ a PV-ring for $M$. Then the map
$$\alpha:R'\otimes_C C_{R'\otimes_S R}\longrightarrow R'\otimes_S R, r\otimes a\mapsto (r\otimes 1)\cdot a$$
is an ID-isomorphism. Furthermore, $ C_{R'\otimes_S R}$ is a finitely generated $C$-algebra.
\end{thm}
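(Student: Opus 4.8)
The plan is to exhibit an explicit invertible matrix $Z$ with constant entries that generates $R'\otimes_S R$ over $R'$, and then to identify $R'\otimes_S R$ with a trivial-constant extension $R'\otimes_C D$ via the ideal correspondence of Proposition~\ref{prop:ideal-bijection}; the finite generation of the constants and the bijectivity of $\alpha$ both fall out of this single identification. The conceptual heart is the observation that a base change between two bases of constant vectors is itself constant.

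First I would fix a constant $R$-basis $\vect{e}$ of $R\otimes_S M$ and a constant $R'$-basis $\vect{e}'$ of $R'\otimes_S M$, which exist because $R$ and $R'$ are solution rings. Setting $A:=R'\otimes_S R$ and $V:=A\otimes_S M$, both $\vect{e}$ and $\vect{e}'$ push forward to constant $A$-bases of the free $A$-module $V$ (it is free because it is a trivial ID-module), so there is a base-change matrix $Z\in\GL_r(A)$ with $\vect{e}=\vect{e}'Z$. Applying $\theta_V$ and using that both bases are constant gives $\vect{e}'Z=\vect{e}=\theta_V(\vect{e}'Z)=\vect{e}'\theta_A(Z)$, and since $\vect{e}'$ is a basis this forces $\theta_A(Z)=Z$ entrywise, i.e. $Z\in\GL_r(B)$ where $B:=C_{R'\otimes_S R}$.

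Next I would show $A=R'[Z,\det(Z)^{-1}]$. By the explicit description of the PV-ring from Theorem~\ref{thm:explicite-pv-ring}, one has $R=S[Y_j,\det(x_j^{n_j}Y_j^{-1})\mid j=1,\dots,l]$, where $\vect{b_j}=\vect{e}Y_j$ for bases $\vect{b_j}$ of $M[\tfrac{1}{x_j}]$ lying in $M$. Writing the same $\vect{b_j}$ in the basis $\vect{e}'$ gives $\vect{b_j}=\vect{e}'Y_j'$ with $Y_j'\in\Mat_{r\times r}(R')$ and $\det(x_j^{n_j}(Y_j')^{-1})\in R'$ (this is Step~1 of that theorem applied to the solution ring $R'$). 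Comparing the two expressions through $\vect{e}=\vect{e}'Z$ yields $1\otimes Y_j=Z^{-1}(Y_j'\otimes 1)$ together with a matching relation $1\otimes\det(x_j^{n_j}Y_j^{-1})=\det(Z)\cdot\bigl(\det(x_j^{n_j}(Y_j')^{-1})\otimes 1\bigr)$, so all generators of $A=R'\otimes_S R$ over $R'\otimes 1$ lie in $R'[Z,\det(Z)^{-1}]$. Since the entries of $Z$ are constants, the image of $\alpha$ is this subalgebra, which is all of $A$; hence $\alpha$ is surjective.

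Finally I would run the ideal correspondence. Let $C[\mathbf{Z},\det(\mathbf{Z})^{-1}]$ be the coordinate ring of $\GL_{r,C}$ equipped with the trivial iterative derivation, and let $\phi\colon R'\otimes_C C[\mathbf{Z},\det(\mathbf{Z})^{-1}]\to A$ be the $R'$-algebra ID-homomorphism sending $\mathbf{Z}$ to $Z$; by the previous step it is surjective. Since $R'$ is ID-simple and $C[\mathbf{Z},\det(\mathbf{Z})^{-1}]$ is a finitely generated $C$-algebra, Proposition~\ref{prop:ideal-bijection} gives $\ker\phi=R'\otimes_C J$ for an ideal $J$, whence $A\isom R'\otimes_C D$ with $D:=C[\mathbf{Z},\det(\mathbf{Z})^{-1}]/J$ a finitely generated $C$-algebra carrying the trivial derivation. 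By Proposition~\ref{prop:constants-of-triv-ext} (applicable as $D$ is free over the field $C$) the constants of $R'\otimes_C D$ are exactly $1\otimes D$, so this isomorphism restricts to an isomorphism $D\xrightarrow{\sim}B$, proving that $B=C_{R'\otimes_S R}$ is a finitely generated $C$-algebra. Chasing the definitions then shows $\alpha$ coincides with the composite $R'\otimes_C B\isom R'\otimes_C D\xrightarrow{\sim}A$, hence is an ID-isomorphism. I expect the main obstacle to be the bookkeeping in the third paragraph: verifying that the determinant generators $\det(x_j^{n_j}Y_j^{-1})$, and not merely the entries of the $Y_j$, remain inside $R'[Z,\det(Z)^{-1}]$ once $M$ is no longer assumed free.
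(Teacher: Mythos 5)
Your proposal is correct, and its surjectivity step is in substance identical to the paper's: both fix constant bases $\vect{e},\vect{e}'$, observe that the base-change matrix $Z$ between them is constant and invertible with constant inverse, and then use Theorem~\ref{thm:explicite-pv-ring} (applied to $R$ and to $R'$; your $Y_j'$ is the paper's $X_j$) to push the generators $Y_j$ and $\det(x_j^{n_j}Y_j^{-1})$ of $R$ into $R'[Z,\det(Z)^{-1}]$. Where you genuinely diverge is in the treatment of injectivity and of finite generation of the constants. The paper argues directly: $\Ker(\alpha)$ is an ID-ideal of $R'\otimes_C C_{R'\otimes_S R}$, hence by ID-simplicity of $R'$ is generated by constants, which embed into $R'\otimes_S R$, so $\Ker(\alpha)=\{0\}$; finite generation then follows \emph{a posteriori}, since the restriction of the (now bijective) $\alpha$ to $R'\otimes_C C[Z,Z^{-1}]$ is already surjective, forcing $C[Z,Z^{-1}]=C_{R'\otimes_S R}$. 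You instead present $R'\otimes_S R$ as an ID-quotient of $R'\otimes_C C[\mathbf{Z},\det(\mathbf{Z})^{-1}]$, identify the kernel as $R'\otimes_C J$ via Proposition~\ref{prop:ideal-bijection}, and obtain the ID-isomorphism $R'\otimes_S R\isom R'\otimes_C D$ in one stroke, from which both the computation of constants (Proposition~\ref{prop:constants-of-triv-ext}) and the bijectivity of $\alpha$ fall out; this mirrors the paper's proof of Proposition~\ref{prop:generated-by-fsm} rather than its proof of this theorem. Your route has the merit that every appeal to the ideal correspondence stays within the hypotheses of Proposition~\ref{prop:ideal-bijection} as stated (a \emph{finitely generated} $C$-algebra $D$), whereas the paper's kernel argument needs ID-ideals of $R'\otimes_C B$ to be generated by constants for $B=C_{R'\otimes_S R}$ not yet known to be finitely generated, and cites Proposition~\ref{prop:constants-of-triv-ext}, which strictly speaking only computes the constants; the paper's route, in exchange, is shorter and exhibits $C_{R'\otimes_S R}=C[Z,Z^{-1}]$ on the nose. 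Two harmless elisions in your write-up you should make explicit: that $Z^{-1}$ is also constant (immediate from $\theta(Z)\theta(Z^{-1})=\mathds{1}_r$ since $\theta$ is a ring homomorphism), and that $R$ itself equals the explicit ring $S[Y_j,\det(x_j^{n_j}Y_j^{-1})\mid j]$ (apply Theorem~\ref{thm:explicite-pv-ring} to the solution ring $R$ and use minimality of $R$ to conclude the inclusion $\tR\subseteq R$ is an equality).
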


\begin{proof}
By definition $\alpha$ is an ID-homomorphism.\\
First we show injectivity: Since $\alpha$ is an ID-homomorphism, $\Ker(\alpha)$ is an ID-ideal of $R'\otimes_C C_{R'\otimes_S R}$. Since $R'$ is ID-simple, $\Ker(\alpha)$ is generated by elements in $C_{R'\otimes_S R}$ by Proposition \ref{prop:constants-of-triv-ext}. But $C_{R'\otimes_S R}$ embeds into $R'\otimes_S R$. Hence, $\Ker(\alpha)=\{0\}$.\\
For showing surjectivity, we use the notation of Theorem \ref{thm:explicite-pv-ring}. So let $x_1,\dots ,x_l\in S$ be such that $\gener{x_1,\dots, x_l}_S=S$ and $M[\frac{1}{x_i}]$ is free as $S[\frac{1}{x_i}]$-module for all $i=1,\dots, l$, and let $\vect{b_i}$ be a basis of $M[\frac{1}{x_i}]$ consisting of elements of $M$, and $n_i\in\NN$ such that $x_i^{n_i}M\subseteq \gener{\vect{b_i}}_S$.
Furthermore, let $\vect{e}$ resp. $\vect{e}'$ be a basis of constant elements in $R\otimes_S M$ resp. $R'\otimes_S M$. Additionally, let $Y_i\in \Mat_r(R)$ and $X_i\in \Mat_r(R')$ such that $\vect{b_i}=\vect{e}Y_i=\vect{e}'X_i$.
Then $R$ is generated over $S$ by the entries of $Y_i$ and $x_i^{n_i}Y_i^{-1}$, and by $R'$-linearity of $\alpha$ it is enough to show that these entries are in $\Ima(\alpha)$.

$\vect{e}$ and $\vect{e}'$ can also be viewed as bases of the free $(R'\otimes_S R)$-module $(R'\otimes_S R)\otimes_S M$.\footnote{$R$ and $R'$ both embed into $R'\otimes_S R$, since they are both ID-simple.}
Hence, there is a matrix $Z\in \GL_r(R'\otimes_S R)$ such that $\vect{e} Z=\vect{e}'$. Since both $\vect{e}$ and $\vect{e}'$ consist of constant vectors the entries of $Z$ are also constant, and the same holds for its inverse $Z^{-1}\in \GL_r(R'\otimes_S R)$. Hence, $Z\in  \GL_r(C_{R'\otimes_S R})$.

For all $i$ we have $\vect{e}Y_i=\vect{b_i}=\vect{e}'X_i=\vect{e}ZX_i$ and hence $Y_i=ZX_i\in \Mat_r(R'\otimes_S R)$, as well as $x_i^{n_i}Y_i^{-1}=(x_i^{n_i}X_i^{-1})Z^{-1}$.
Hence, the entries of all $Y_i$ and of all $x_i^{n_i}Y_i^{-1}$ are in the image of $\alpha$.

\smallskip

Finally, as just seen, the restriction of $\alpha$ to $R'\otimes_C C[Z,Z^{-1}]\subseteq R'\otimes_C C_{R'\otimes_S R}$ is also surjective. But $\alpha$ is an isomorphism, so \mbox{$R'\otimes_C C[Z,Z^{-1}]=R'\otimes_C C_{R'\otimes_S R}$}. Therefore, $C[Z,Z^{-1}]=C_{R'\otimes_S R}$, and $C_{R'\otimes_S R}$ is a finitely generated $C$-algebra.
\end{proof}

\begin{prop}
Let $R$ and $R'$ be PV-rings for $M$, and let $D$ be a $C$-algebra equipped with the trivial iterative derivation.
Then any $(S\otimes_C D)$-linear ID-homomorphism $R\otimes_C D\to R'\otimes_C D$ is an isomorphism.
\end{prop}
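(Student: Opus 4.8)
The plan is to show that $\phi$ is both injective and surjective; since the inverse of a bijective ID-homomorphism again commutes with the $\theta^{(n)}$, this already yields an ID-isomorphism. Since $C$ is a field, $D$ is free as a $C$-module, so Proposition~\ref{prop:constants-of-triv-ext} identifies the constants of both $R\otimes_C D$ and $R'\otimes_C D$ with $1\otimes D\isom D$. Because $\phi$ is $(S\otimes_C D)$-linear, it restricts on $S\otimes_C D$ (in particular on $1\otimes D$) to the canonical map $S\otimes_C D\to R'\otimes_C D$, which is injective since $S\hookrightarrow R'$ and $-\otimes_C D$ is exact. Its image $\Ima(\phi)$ is an $(S\otimes_C D)$-subalgebra of $R'\otimes_C D$.

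For injectivity, note that $\Ker(\phi)$ is an ID-ideal of $R\otimes_C D$. By the same reasoning as in the proof of Theorem~\ref{thm:torsor-isomorphism} (ID-simplicity of $R$ together with Proposition~\ref{prop:constants-of-triv-ext}), this ideal is generated by its constant elements, i.e.~by $\Ker(\phi)\cap(1\otimes D)$. But $\phi$ is the (injective) inclusion on $1\otimes D$, so this intersection is $\{0\}$, and hence $\Ker(\phi)=\{0\}$.

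For surjectivity I use the explicit generators from Theorem~\ref{thm:explicite-pv-ring}, keeping its notation ($x_1,\dots,x_l$, bases $\vect{b_i}$ of $M[\frac{1}{x_i}]$, integers $n_i$), so that $R'=S[Y_j',\det(x_j^{n_j}(Y_j')^{-1})\mid j]$. Let $\vect{e}$ and $\vect{e}'$ be constant bases of the trivial modules $(R\otimes_C D)\otimes_S M$ and $(R'\otimes_C D)\otimes_S M$, and define $Y_i,Y_i'$ by $\vect{b_i}=\vect{e}Y_i=\vect{e}'Y_i'$. The map $\Phi:=\phi\otimes_S\id_M$ is an ID-homomorphism fixing $1\otimes M$, so it carries the constant vectors $\vect{e}$ to constants; since the constants of $(R'\otimes_C D)\otimes_S M$ form the $D$-span of $\vect{e}'$, there is $Z\in\Mat_{r\times r}(D)$ with $\Phi(\vect{e})=\vect{e}'Z$. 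Comparing $\vect{b_i}=\Phi(\vect{b_i})=\Phi(\vect{e})\phi(Y_i)=\vect{e}'Z\phi(Y_i)$ with $\vect{b_i}=\vect{e}'Y_i'$ gives $Y_i'=Z\phi(Y_i)$. The crucial point is that $Z$ is invertible over $D$: after inverting $x_i$ both $Y_i$ and $Y_i'$ are base-change matrices between two bases of a free module, hence invertible, and $\phi$ extends to the localisations, so $Z=Y_i'\phi(Y_i)^{-1}\in\GL_r(R'[\frac{1}{x_i}]\otimes_C D)$ and $\det(Z)\in D$ is a unit there. Writing $R'[\frac{1}{x_i}]\otimes_C D=\bigoplus_\lambda a_\lambda\otimes D$ for a $C$-basis $(a_\lambda)$ of $R'[\frac{1}{x_i}]$ with $a_{\lambda_0}=1$ and comparing the $a_{\lambda_0}$-component of an equation $(1\otimes\det Z)\,u=1$ shows $\det(Z)$ is already a unit in $D$, so $Z\in\GL_r(D)$. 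Then $Z,Z^{-1}\in\Mat_{r\times r}(1\otimes D)\subseteq\Mat_{r\times r}(\Ima\phi)$, and from $Y_i'=Z\phi(Y_i)$ together with $\det\bigl(x_i^{n_i}(Y_i')^{-1}\bigr)=\det(Z)^{-1}\,\phi\bigl(\det(x_i^{n_i}Y_i^{-1})\bigr)$ all generators of $R'\otimes_C D$ over $S\otimes_C D$ lie in $\Ima\phi$. Hence $\phi$ is surjective, and therefore an ID-isomorphism.

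The main obstacle is exactly the invertibility of the transition matrix $Z$ over $D$: the identity $Y_i'=Z\phi(Y_i)$ only becomes useful once $\det Z\in D^\times$, and since $Y_i,Y_i'$ are invertible only after localising at $x_i$, this forces the descent step from $R'[\frac{1}{x_i}]\otimes_C D$ back to $D$ carried out above. A secondary technicality is that $D$ need be neither Noetherian nor finitely generated; should one prefer to invoke the ideal correspondence of Proposition~\ref{prop:ideal-bijection} in the injectivity step rather than the constants-generation principle, one first restricts to a finitely generated $C$-subalgebra of $D$ containing the finitely many coefficients occurring.
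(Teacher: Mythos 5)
Your proof is correct, and its injectivity half is exactly the paper's: the kernel is an ID-ideal of $R\otimes_C D$, hence generated by its constants, which lie in $1\otimes D$ where $\phi$ is the identity. Where you genuinely diverge is in how the surjectivity, i.e.\ the invertibility of the transition matrix $Z$, is obtained. The paper sidesteps this difficulty: since $R$ is ID-simple, $\phi$ restricted to $R$ is an ID-isomorphism onto its image, so $\phi(R)$ is itself a PV-ring for $M$; one then takes $\vect{e}$ to be a constant basis of $\phi(R)\otimes_S M$ (not the $\Phi$-image of a basis of $(R\otimes_C D)\otimes_S M$), so that both $\vect{e}$ and $\vect{e}'$ become, by scalar extension, bases of the free module $(R'\otimes_C D)\otimes_S M$; a transition matrix between two bases is automatically invertible, and constancy of both bases puts $Z\in\GL_r(D)$ at once. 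The rest (expressing the generators of $R'$ from Theorem~\ref{thm:explicite-pv-ring} through $Z^{\pm 1}$ and the image) is then run exactly as in Theorem~\ref{thm:torsor-isomorphism} with $R$ and $R'$ switched. You instead push forward a constant basis by $\Phi$, which is not a priori a basis, and so must prove $\det Z\in D^\times$ by hand; your localisation step $Z=Y_i'\phi(Y_i)^{-1}\in\GL_r\bigl(R'[\frac{1}{x_i}]\otimes_C D\bigr)$ followed by the componentwise descent to $D$ is valid and self-contained, and it is a nice observation that unit-ness descends along $D\to R'[\frac{1}{x_i}]\otimes_C D$. What the paper's choice buys is brevity and the elimination of two small glosses in your write-up: applying $\phi$ to $\det(x_i^{n_i}Y_i^{-1})$ presupposes that $x_i^{n_i}Y_i^{-1}$ has entries in $R\otimes_C D$ rather than only in its localisation (true, by the Step 1 argument of Theorem~\ref{thm:explicite-pv-ring} applied over $R\otimes_C D$), and your $Y_i'$, defined via an arbitrary constant basis $\vect{e}'$ of $(R'\otimes_C D)\otimes_S M$, agrees with the generators of $R'$ from Theorem~\ref{thm:explicite-pv-ring} only up to a factor in $\GL_r(D)$ --- harmless since $1\otimes D\subseteq S\otimes_C D$, but worth a sentence if you keep your route (or simply choose $\vect{e}'$ inside $R'\otimes_S M$).
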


\begin{proof}
Let $\beta:R\otimes_C D\to R'\otimes_C D$ be an $(S\otimes_C D)$-linear ID-homomorphism. As in the previous proof, $\Ker(\beta)$ is generated by constants and hence is trivial.
For the surjectivity, we remark that $\beta(R)$ and $R'$ are both PV-rings for $M$.
As in the previous proof, there are bases of constant elements $\vect{e}$ and $\vect{e}'$ in $\beta(R)\otimes_S M$ resp. $R'\otimes_S M$ which can both be viewed as bases of the free $(R'\otimes_C D)$-module  $(R'\otimes_C D)\otimes_S M$.

The same arguments as in the previous proof (with $R$ and $R'$ switched) show that $R'$ is contained in the subring $\beta(R\otimes_C D)=\beta(R)\cdot D$ of $R'\otimes_C D$. Hence by $D$-linearity $\beta$ is surjective.
\end{proof}

\begin{thm}\label{thm:the-scheme-isom}
Let $R$ and $R'$ be PV-rings for $M$. Then the functor
$$\uIsom_S^{ID}(R,R'): (\cat{Algebras}/C) \longrightarrow (\cat{Sets}), D\mapsto \Isom_S^{ID}(R\otimes_C D,R'\otimes_C D)$$
is represented by $\Spec(C_{R'\otimes_S R})$. In particular, it is an affine scheme of finite type over~$C$.
\end{thm}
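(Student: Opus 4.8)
The plan is to represent the functor $\uIsom_S^{ID}(R,R')$ by exhibiting a natural bijection between $\Isom_S^{ID}(R\otimes_C D, R'\otimes_C D)$ and $\Hom_{C\text{-alg}}(C_{R'\otimes_S R}, D)$ for every $C$-algebra $D$. The key structural input is Theorem~\ref{thm:torsor-isomorphism}, which already tells us that $C_{R'\otimes_S R}$ is a finitely generated $C$-algebra and that the multiplication map $\alpha: R'\otimes_C C_{R'\otimes_S R}\to R'\otimes_S R$ is an ID-isomorphism. So once the representability is established, the ``affine scheme of finite type'' conclusion is immediate.

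First I would set up the correspondence in one direction. Given a $C$-algebra homomorphism $\varphi: C_{R'\otimes_S R}\to D$, I want to produce an $(S\otimes_C D)$-linear ID-isomorphism $R\otimes_C D\to R'\otimes_C D$. The idea is to use the canonical inclusion $R\hookrightarrow R'\otimes_S R$ (valid since both rings are ID-simple, as noted in the footnote of Theorem~\ref{thm:torsor-isomorphism}) composed with $\alpha^{-1}$ to land in $R'\otimes_C C_{R'\otimes_S R}$, and then apply $\id_{R'}\otimes_C\varphi$ to push $C_{R'\otimes_S R}$ into $D$. Concretely, for $r\in R$ write $\alpha^{-1}(1\otimes r)=\sum_k r'_k\otimes a_k$ with $r'_k\in R'$, $a_k\in C_{R'\otimes_S R}$, and define the image of $r\otimes 1$ to be $\sum_k r'_k\otimes \varphi(a_k)\in R'\otimes_C D$; extend $(S\otimes_C D)$-linearly. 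One must check this is a well-defined ID-ring homomorphism (the $a_k$ are constants, so $\varphi$ of them remains constant under the trivial derivation on $D$), and then the previous proposition guarantees it is automatically an isomorphism.

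For the reverse direction, given an $(S\otimes_C D)$-linear ID-isomorphism $\psi: R\otimes_C D\to R'\otimes_C D$, I would recover a homomorphism $C_{R'\otimes_S R}\to D$. Using the notation of Theorem~\ref{thm:torsor-isomorphism}, $C_{R'\otimes_S R}=C[Z,Z^{-1}]$ where $Z\in\GL_r(C_{R'\otimes_S R})$ is the base-change matrix between the constant bases $\vect{e}$ and $\vect{e}'$ of $(R'\otimes_S R)\otimes_S M$. Comparing the constant bases of $R\otimes_C D\otimes_S M$ and $R'\otimes_C D\otimes_S M$ induced by $\psi$ produces an invertible matrix over $C_{(R'\otimes_C D)}=D$, and this matrix defines the map on the generators $Z$ of the group ring; the two constructions are mutually inverse by tracking the base-change matrices. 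Finally I would verify naturality in $D$, which is routine once the constructions are written functorially.

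\textbf{The main obstacle} I anticipate is the well-definedness and ID-compatibility of the map produced from $\varphi$: one needs to know that pushing forward along $\id_{R'}\otimes\varphi$ respects the iterative derivation and is independent of the chosen expression $\alpha^{-1}(1\otimes r)=\sum_k r'_k\otimes a_k$. This rests on the fact that $\alpha$ is an ID-\emph{iso}morphism with $C_{R'\otimes_S R}$ sitting in the constant locus, so that applying a $C$-algebra map to the constant factor commutes with the (trivial) derivation there; making this precise, and confirming that the resulting map is genuinely $(S\otimes_C D)$-linear rather than merely $C$-linear, is where the care is needed. The surjectivity/bijectivity of the resulting ID-homomorphism is then free from the previous proposition, so it is really the \emph{functoriality packaged into a ring structure on $C_{R'\otimes_S R}$} — rather than any single isomorphism — that carries the weight of the argument.
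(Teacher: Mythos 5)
Your overall strategy --- a natural bijection between $\Isom_S^{ID}(R\otimes_C D,R'\otimes_C D)$ and $\Hom_C(C_{R'\otimes_S R},D)$, with the forward map built from $\alpha^{-1}$ of Theorem \ref{thm:torsor-isomorphism} and the previous proposition upgrading homomorphisms to isomorphisms --- is exactly the argument the paper invokes by reference, and your forward direction is correct as stated. The gap is in your reverse direction. You propose to define $\varphi: C_{R'\otimes_S R}=C[Z,Z^{-1}]\to D$ by sending $Z$ to the base-change matrix $Z_\psi\in\GL_r(D)$ that $\psi$ produces between constant bases. But $C[Z,Z^{-1}]$ is not a free object: it is the $C$-subalgebra generated by the entries of $Z$ and $\det(Z)^{-1}$, and these generators in general satisfy nontrivial polynomial relations over $C$ (whenever the Galois group scheme is a proper closed subgroup scheme of $\GL_{r,C}$, the ring $C_{R'\otimes_S R}$ is a proper quotient of $C[\GL_r]$; already $\det(Z)=1$ is such a relation in the $\mathrm{SL}$-type case). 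A $C$-algebra homomorphism cannot be defined merely by prescribing images of generators of a non-free algebra; you must verify that every relation $P(Z,\det(Z)^{-1})=0$ holding in $C_{R'\otimes_S R}$ also holds for $Z_\psi$ over $D$, and nothing in your proposal addresses this.

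The standard repair --- and the route taken in the sources the paper cites --- is to obtain the reverse map as the restriction of an honest ring homomorphism rather than entrywise on matrices: given $\psi$, the map
$$\Psi: R'\otimes_S R\longrightarrow R'\otimes_C D,\qquad r'\otimes r\mapsto (r'\otimes 1)\cdot\psi(r\otimes 1),$$
is well defined (the formula is balanced over $S$ precisely because $\psi$ is $(S\otimes_C D)$-linear) and is an ID-homomorphism, hence carries constants to constants; restricting it to constants gives $\varphi:=\Psi|_{C_{R'\otimes_S R}}: C_{R'\otimes_S R}\to C_{R'\otimes_C D}=D$, where the identification $C_{R'\otimes_C D}=D$ is Proposition \ref{prop:constants-of-triv-ext} (applicable since $D$, being a module over the field $C$, is free). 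One then computes that $\Psi(Z)$ is exactly your $Z_\psi$, so this construction induces your assignment on generators but with well-definedness for free; it also makes the verification that the two constructions are mutually inverse, and the naturality in $D$, essentially formal, whereas ``tracking the base-change matrices'' would additionally require the explicit generator description of $R$ from Theorem \ref{thm:explicite-pv-ring} to see that $\psi$ is determined by $Z_\psi$. So your proof is repairable, but the repair is precisely the construction you omitted, and it is where the real content of the reverse direction lies.
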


\begin{proof}
Using the previous proposition and theorem, the proof is exactly the same as in \cite[Cor.~2.11]{td:tipdgtfrz}, or in \cite[Prop.~10.9.]{am:gticngg}.
\end{proof}

As a special case for $R'=R$ we obtain the representability of $\underline{\Aut}^{ID}(R/S)$.

\begin{cor}\label{cor:galois-group-scheme}
For a PV-extension $R/S$, the group functor $\underline{\Aut}^{ID}(R/S)$ is represented by $\G:=\Spec(C_{R\otimes_S R})$, and thus $\G\isom \underline{\Aut}^{ID}(R/S)$ is an affine group scheme of finite type over $C$.
\end{cor}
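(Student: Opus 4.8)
The plan is to deduce this corollary as the special case $R'=R$ of Theorem~\ref{thm:the-scheme-isom}, and then to upgrade the resulting affine \emph{scheme} to an affine \emph{group} scheme by a Yoneda-type argument. The only genuine content beyond the cited theorem is the verification that $\underline{\Aut}^{ID}(R/S)$ really is a group functor; everything else is formal.

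First I would set $R'=R$ in Theorem~\ref{thm:the-scheme-isom}. This immediately gives that the set-valued functor $\uIsom_S^{ID}(R,R)$, sending a $C$-algebra $D$ to $\Isom_S^{ID}(R\otimes_C D, R\otimes_C D)$, is represented by $\G=\Spec(C_{R\otimes_S R})$, and that it is of finite type over $C$ because $C_{R\otimes_S R}$ is a finitely generated $C$-algebra (again by Theorem~\ref{thm:the-scheme-isom}). By definition an $(S\otimes_C D)$-linear ID-isomorphism of $R\otimes_C D$ with itself is exactly an element of $\underline{\Aut}^{ID}(R/S)(D)$, so this functor coincides with $\underline{\Aut}^{ID}(R/S)$ as a functor to sets.

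Next I would exhibit the natural group structure. For each $C$-algebra $D$, the set $\Isom_S^{ID}(R\otimes_C D, R\otimes_C D)$ is a monoid under composition, with unit $\id$. By the proposition preceding Theorem~\ref{thm:the-scheme-isom}, every $(S\otimes_C D)$-linear ID-endomorphism of $R\otimes_C D$ is automatically an isomorphism; hence this monoid is in fact a group, and composition is visibly functorial in $D$. Thus $\underline{\Aut}^{ID}(R/S)$ is a group-valued functor on $(\cat{Algebras}/C)$.

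Finally I would invoke the Yoneda lemma: a representable group functor is represented by a group object in the category of affine $C$-schemes, i.e.\ by an affine group scheme. Concretely, the multiplication, unit, and inversion natural transformations transport along the representing isomorphism to morphisms $\G\times_C\G\to\G$, $\Spec(C)\to\G$, and $\G\to\G$ satisfying the group axioms; equivalently, $C_{R\otimes_S R}$ acquires the structure of a commutative Hopf algebra over $C$. This yields the isomorphism of group functors $\G\isom\underline{\Aut}^{ID}(R/S)$ and exhibits $\G$ as an affine group scheme of finite type over $C$. I do not expect any real obstacle here: the representability is inherited verbatim from Theorem~\ref{thm:the-scheme-isom}, the automatic invertibility of endomorphisms supplies the group axioms, and the passage from representable group functor to affine group scheme is the standard Yoneda argument.
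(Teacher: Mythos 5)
Your proof is correct and takes essentially the same route as the paper: the paper's entire argument for this corollary is to specialize Theorem~\ref{thm:the-scheme-isom} to $R'=R$, leaving the group-functor structure and the Yoneda transport of the group law onto $\G=\Spec(C_{R\otimes_S R})$ (i.e.\ the Hopf algebra structure) implicit as standard, exactly the formalities you spell out. One small redundancy: you do not need the preceding proposition to get a group rather than a monoid, since $\Isom_S^{ID}(R\otimes_C D,R\otimes_C D)$ is automatically a group under composition --- the inverse of an $(S\otimes_C D)$-linear ID-isomorphism is again one.
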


\begin{defn}
We call $\G=\underline{\Aut}^{ID}(R/S)$ the {\markdef ID-Galois group (scheme)} of $R/S$ and denote it by $\uGal(R/S)$.
\end{defn}

\begin{prop}
Let $R/S$ be a PV-extension and $\G=\uGal(R/S)$ the ID-Galois group. Denote $\G_S:=\G\times_C \Spec(S)$ the extension of $\G$ by scalars.  Then $\Spec(R)$ is a $\G_S$-torsor. 
\end{prop}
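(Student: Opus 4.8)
The plan is to show that $\Spec(R)$ is a torsor under the base-changed group scheme $\G_S = \G \times_C \Spec(S)$ by translating the torsor axioms into the language of the ring $R$ and its constants. Recall that $\G = \Spec(C_{R \otimes_S R})$ represents $\underline{\Aut}^{ID}(R/S)$ by Corollary~\ref{cor:galois-group-scheme}, and that $\G_S = \Spec(S \otimes_C C_{R\otimes_S R})$. A $\G_S$-torsor structure on $\Spec(R)$ corresponds, on the level of rings, to a coaction $R \to R \otimes_S (S \otimes_C C_{R\otimes_S R}) = R \otimes_C C_{R\otimes_S R}$ which makes $R$ a comodule algebra, together with the requirement that the induced map realize $\Spec(R)$ as a principal homogeneous space. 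The first thing I would do is set up this coaction explicitly. Applying Theorem~\ref{thm:torsor-isomorphism} in the case $R' = R$ gives the ID-isomorphism $\alpha: R \otimes_C C_{R\otimes_S R} \xrightarrow{\sim} R \otimes_S R$, and I would use the second tensor factor inclusion $R \to R \otimes_S R$ composed with $\alpha^{-1}$ to obtain the desired coaction $\rho: R \to R \otimes_C C_{R\otimes_S R}$.

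Next I would verify the two defining conditions of a torsor. First, faithful flatness of $\Spec(R) \to \Spec(S)$ is exactly the content of Corollary~\ref{cor:faithful-flatness}(b), so $R/S$ is faithfully flat and this hypothesis is already in hand. Second, and this is the heart of the matter, I must show that the canonical map
$$\Spec(R) \times_{\Spec(S)} \G_S \longrightarrow \Spec(R) \times_{\Spec(S)} \Spec(R)$$
sending $(x, g) \mapsto (x, x\cdot g)$ is an isomorphism. On the level of rings this is precisely the assertion that the algebra map
$$R \otimes_S R \longrightarrow R \otimes_S (S \otimes_C C_{R\otimes_S R}) = R \otimes_C C_{R\otimes_S R}$$
is an isomorphism. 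But this map is nothing other than $\alpha^{-1}$ from Theorem~\ref{thm:torsor-isomorphism} (with $R' = R$), which that theorem already establishes to be an ID-isomorphism. Thus the torsor-isomorphism condition is delivered essentially for free by the earlier result.

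The remaining work is bookkeeping: I would check that $\rho$ is coassociative and counital, i.e.\ that it genuinely defines an action of the group scheme $\G_S$ rather than merely an abstract isomorphism of the two base-changed rings. This amounts to verifying compatibility of $\alpha$ with the comultiplication on $C_{R\otimes_S R}$ coming from the group structure on $\G = \underline{\Aut}^{ID}(R/S)$; since that comultiplication is defined precisely so that $\G$ represents the automorphism functor, the compatibility is formal and follows from the functoriality underlying Theorem~\ref{thm:the-scheme-isom}. I would also note that $\Spec(R)$ is nonempty (as $R \neq 0$), so the torsor is genuine and not the empty one.

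The main obstacle I anticipate is not any single hard estimate but rather the organisational point of keeping the two roles of $R$ straight: in the torsor isomorphism $R \otimes_C C_{R\otimes_S R} \cong R \otimes_S R$, one $R$-factor plays the role of the "base point" scalars while the other encodes the group action, and one must be careful that the coaction $\rho$ extracted this way is $S$-linear on the correct factor and ID-compatible. Because our modules need not be free, the surjectivity argument in Theorem~\ref{thm:torsor-isomorphism} genuinely used the local trivialisations $\vect{b_i}$ over the $S[\tfrac{1}{x_i}]$; I would make sure that the torsor property, which I am now deducing, does not secretly require freeness that we do not have. Fortunately, since $\alpha$ has already been shown to be an isomorphism in the non-free setting, all the delicate work is subsumed in the cited theorem, and the torsor statement follows by reorganising that isomorphism together with faithful flatness from Corollary~\ref{cor:faithful-flatness}.
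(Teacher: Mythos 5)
Your proposal is correct and follows essentially the same route as the paper: the paper's proof also takes the inverse of the isomorphism $\alpha$ from Theorem~\ref{thm:torsor-isomorphism} with $R'=R$, observes that it gives the required isomorphism $\G_S\times_{\Spec(S)}\Spec(R)\to\Spec(R)\times_{\Spec(S)}\Spec(R)$, and dismisses the verification that this map is $(g,x)\mapsto(g(x),x)$ as bookkeeping of identifications. Your explicit construction of the coaction $\rho$ via $\alpha^{-1}$ restricted to the second tensor factor is precisely what the paper records in Remark~\ref{rem:rho}, and your invocation of faithful flatness from Corollary~\ref{cor:faithful-flatness} is a harmless (indeed careful) addition.
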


\begin{proof}
The inverse of the isomorphism $\alpha$ of Theorem \ref{thm:torsor-isomorphism} for $R'=R$ induces an isomorphism of affine schemes
$$ \G_S\times_{\Spec(S)} \Spec(R)=\G\times_C \Spec(R) \longrightarrow \Spec(R)\times_{\Spec(S)} \Spec(R).$$
By bookkeeping of the identifications one verifies that this map is indeed the isomorphism $(g,x)\mapsto (g(x),x)$ indicating that $\Spec(R)$ is a $\G_S$-torsor.
\end{proof}

For setting up a Galois correspondence, we need a definition of functorial invariants (comp.~\cite[Sect.~11]{am:gticngg}):

Let $\H \leq \G$ be a subgroup functor, \ie{} for every
$C$-algebra $D$, the set $\H(D)$ is a group acting on $R_D:=R\otimes_C
D$ and this action is functorial in $D$. 
An element $r\in R$ is then called {\markdef invariant} under $\H$ if
for all $D$, the element $r\otimes 1\in R_D$ is invariant under
$\H(D)$. The ring of invariants is denoted by $R^{\H}$. (In
\cite[I.2.10]{jcj:rag} the invariant elements are called ``fixed
points''.)

\begin{rem}\label{rem:rho}
Let $\gamma:R\otimes_S R\to R\otimes_C C[\G]$ denote the inverse of the isomorphism $\alpha$.
The action of $\G:=\underline{\Gal}(R/S)$ on $R$ is fully described by the
ID-homomorphism $\rho:=\gamma\restr{1\otimes R}:R\to R\otimes_C C[\G]$.
Namely, for a $C$-algebra $D$ and $g\in \G(D)$ with corresponding
$\tilde{g}\in \Hom(C[\G],D)$, one
has $g(r\otimes 1)=(1\otimes \tilde{g})(\rho(r))\in R\otimes_C D$ for all
$r\in R$.\\
Furthermore, for a closed subgroup scheme $\H\leq \G$, defined by an ideal $I\subseteq C[\G]$,  one has $r\in R^\H$ if and only if, $\rho(r)\equiv r\otimes 1 \mod{R\otimes I}$, resp.~if $\pi^\G_\H( \rho(r))= r\otimes 1\in R\otimes_C C[\H]$ where $\pi^\G_\H:C[\G]\to C[\G]/I=C[\H]$ denotes the canonical projection.
\end{rem}

\begin{prop}\label{prop:R^G=S}
Let $R/S$ be a PV-extension and $\G=\uGal(R/S)$ the ID-Galois group. Then $R^\G=S$.
\end{prop}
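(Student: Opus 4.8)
The plan is to prove $R^\G = S$ by a double inclusion, where the inclusion $S \subseteq R^\G$ is immediate and the reverse inclusion is the heart of the matter. First I would observe that every element of $S$ is fixed by $\G$: the action of $g \in \G(D)$ is $S$-linear (the automorphisms are $S$-algebra automorphisms of $R\otimes_C D$), so $s\otimes 1$ is fixed for all $s \in S$, giving $S \subseteq R^\G$.

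For the reverse inclusion I would translate invariance into the language of the coaction map $\rho : R \to R \otimes_C C[\G]$ from Remark \ref{rem:rho}. By that remark, $r \in R^\G$ means precisely $\rho(r) = r \otimes 1$ in $R \otimes_C C[\G]$. The key structural fact I would use is that $\rho$ is, up to the identification $\gamma = \alpha^{-1}$, the map $R \to R\otimes_S R$ sending $r \mapsto 1 \otimes r$, composed with the isomorphism $\alpha^{-1}: R\otimes_S R \xrightarrow{\sim} R\otimes_C C[\G]$ of Theorem \ref{thm:torsor-isomorphism} (specialized to $R' = R$). Under this identification, the condition $\rho(r) = r\otimes 1$ becomes the condition that, inside $R \otimes_S R$, the two elements $1 \otimes r$ and $r \otimes 1$ coincide. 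Thus $R^\G$ is exactly the equalizer of the two natural maps $R \rightrightarrows R\otimes_S R$, $r\mapsto r\otimes 1$ and $r \mapsto 1\otimes r$.

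So the problem reduces to showing that this equalizer is $S$, i.e.\ that an element $r \in R$ with $r\otimes 1 = 1\otimes r$ in $R\otimes_S R$ already lies in $S$. This is precisely where faithful flatness enters: by Corollary \ref{cor:faithful-flatness}(b), $R/S$ is faithfully flat, and for a faithfully flat ring extension $S \to R$ the sequence $0 \to S \to R \rightrightarrows R\otimes_S R$ is exact (the standard descent/amalgamation sequence), meaning exactly that the equalizer of the two maps $R \rightrightarrows R\otimes_S R$ equals the image of $S$. Combining this with the identification of the previous paragraph yields $R^\G \subseteq S$, and together with $S \subseteq R^\G$ we conclude $R^\G = S$.

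The main obstacle I expect is making the identification in the second paragraph fully precise, namely verifying that under the isomorphism $\gamma = \alpha^{-1}$ the coaction $\rho$ really does correspond to $r \mapsto 1\otimes r$ under the inclusion $R \hookrightarrow R\otimes_S R$ into the second factor, so that the invariance condition $\rho(r) = r\otimes 1$ translates to the equation $1\otimes r = r\otimes 1$ in $R\otimes_S R$. Once that bookkeeping is settled, the rest is a clean application of faithfully flat descent via Corollary \ref{cor:faithful-flatness}(b); the exactness of $0 \to S \to R \rightrightarrows R\otimes_S R$ for faithfully flat $R/S$ is the only genuinely external input, and it is entirely standard.
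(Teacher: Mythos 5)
Your proposal is correct and follows essentially the same route as the paper: both translate invariance via Remark \ref{rem:rho} and the isomorphism $\gamma$ of Theorem \ref{thm:torsor-isomorphism} into the condition $r\otimes 1 = 1\otimes r$ in $R\otimes_S R$, and then conclude using faithful flatness from Corollary \ref{cor:faithful-flatness}. The only cosmetic difference is that you invoke the standard exact sequence $0\to S\to R \rightrightarrows R\otimes_S R$ for faithfully flat extensions directly, whereas the paper first observes that the condition is equivalent to $r\in\Quot(S)$ and then uses faithful flatness to conclude $r\in S$.
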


\begin{proof}
By the previous remark, $r\in R^\G$ if and only if $\rho(r)=r\otimes 1$. This means that $\gamma(r\otimes 1)=r\otimes 1=\gamma(1\otimes r)$ which is equivalent to $r\in \Quot(S)$. Since, $R/S$ is faithfully flat by Cor.~ \ref{cor:faithful-flatness}, we obtain $r\in S$.
\end{proof}

\begin{rem}\label{rem:converse-to-R^G=S}
The converse conclusion to Proposition \ref{prop:R^G=S}, i.e.~ that $R^\H=S$ for $\H\leq \G$ implies $\H=\G$, is not true. 
 For example, if $\G=\GL_n(C)$ and $\H$ is taken to be a Borel subgroup, then $R\isom S[\GL_n]$ by Hilbert 90, and $R^\H\isom S[\GL_n]^\H=S$. The geometrical reason is that $R^\H$ is the ring of global sections of the scheme $\Spec(R)/\H$. In case of $\H$ being the Borel subgroup this is a projective scheme over $S$.
\end{rem}

Before we come to the Galois correspondence, we need some lemmas and propositions. We start with a condition on an ID-simple ring ensuring that it is a PV-ring.

\begin{lem}[analog of {\cite[Prop.~10.12]{am:gticngg}}]\label{lem:torsor-is-pv-ring}
Let $R/S$ be a faithfully flat extension of ID-simple rings with $C_R=C_S=C$. Assume there exists an affine group scheme $\G$ of finite type over $C$ such that $\Spec(R)$ is a $\G_S$-torsor and the corresponding isomorphism of $S$-algebras $\gamma: R\otimes_S R\to R\otimes_C C[\G]$ is an ID-isomorphism. Here, as usual $C[\G]$ is equipped with the trivial iterative derivation.
Then $R$ is a Picard-Vessiot ring over $S$.
\end{lem}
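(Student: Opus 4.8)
The plan is to first manufacture an ID-module $M$ over $S$ for which $R$ is a solution ring, and then to show that $R$ is already minimal, so that it is the Picard-Vessiot ring for $M$.

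\emph{Construction of $M$.} Since $\G$ is affine of finite type over $C$, I would fix a closed immersion $\G\hookrightarrow\GL(V)$ into some $\GL_{n,C}$, i.e.\ a \emph{faithful} representation $V\isom C^n$ whose matrix coefficients generate $C[\G]$. Equip the free $R$-module $R\otimes_C V$ with the trivial ID-module structure coming from $R$ and with the $\G$-action given by the torsor coaction $\rho:=\gamma\restr{1\otimes R}:R\to R\otimes_C C[\G]$ on the first factor and the comodule structure on $V$. Because $\Spec(R)$ is a $\G_S$-torsor, faithfully flat descent along $R/S$ is effective, so $M:=(R\otimes_C V)^{\G}$ is an $S$-module with $R\otimes_S M\isom R\otimes_C V$. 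As $\gamma$ is an ID-isomorphism, $\rho$ is an ID-homomorphism, so the $\G$-action commutes with the (componentwise) iterative derivation; hence $M$ is ID-stable and inherits an iterative derivation. Finally $R\otimes_S M\isom R\otimes_C V\isom R^n$ is finitely generated (projective) over $R$, so by faithfully flat descent $M$ is finitely generated over $S$, \ie\ an ID-module.

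\emph{$R$ is a solution ring.} By hypothesis $C_R=C$ and $R$ is ID-simple, and $R\otimes_S M\isom R\otimes_C V\isom R^n$ is a trivial ID-module. Thus $R$ is a solution ring for $M$, and Theorem \ref{thm:explicite-pv-ring} provides a unique Picard-Vessiot ring $\tR$ for $M$ with $\tR\subseteq R$. It remains to prove $\tR=R$.

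\emph{Minimality.} Applying Theorem \ref{thm:torsor-isomorphism} with the PV-ring $\tR$ and the solution ring $R'=R$ gives an ID-isomorphism $R\otimes_C C_{R\otimes_S\tR}\xrightarrow{\sim}R\otimes_S\tR$ with $C_{R\otimes_S\tR}$ finitely generated over $C$; on the other hand the hypothesis $\gamma$ together with Proposition \ref{prop:constants-of-triv-ext} yields $C_{R\otimes_S R}\isom C[\G]$. Since $R/S$ is flat, tensoring the inclusion $\tR\hookrightarrow R$ by $R$ produces an injective ID-homomorphism $R\otimes_S\tR\hookrightarrow R\otimes_S R$, which restricts to an injection of constants $j:C_{R\otimes_S\tR}\hookrightarrow C_{R\otimes_S R}\isom C[\G]$. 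The heart of the argument is to show that $j$ is surjective: writing $\vect{e}$ for the constant basis of $R\otimes_S M$ (which lies in $\tR\otimes_S M$ by the construction of $\tR$ in Theorem \ref{thm:explicite-pv-ring}), the transition matrix $Z$ determined by $(\vect{e}\otimes 1)Z=1\otimes\vect{e}$ lies in $C_{R\otimes_S\tR}$ and maps under $j$ to the corresponding transition matrix in $C_{R\otimes_S R}$; its entries are exactly the matrix coefficients of $V$, which generate $C[\G]$ because $V$ is faithful. Hence $j$ is an isomorphism, and since both sides are identified via the multiplication maps $\alpha$ with $R\otimes_C(\cdot)$, the inclusion $R\otimes_S\tR\hookrightarrow R\otimes_S R$ is itself an isomorphism; faithful flatness of $R/S$ then forces $\tR=R$. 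Therefore $R$ is the Picard-Vessiot ring for $M$. I expect the surjectivity of $j$ — equivalently, the fact that the coaction on the solution basis is governed precisely by the generating matrix coefficients of the faithful representation — to be the main obstacle, the rest being descent bookkeeping and applications of the cited results.
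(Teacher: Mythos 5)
Your proof is correct, but it follows a genuinely different route from the paper's. You obtain $M$ abstractly, by faithfully flat descent of the associated bundle $R\otimes_C V$ of the torsor for a chosen faithful finite-dimensional representation $V$ of $\G$, and you prove minimality by a constants comparison: flat base change of $\tR\subseteq R$ gives an injection $j:C_{R\otimes_S \tR}\hookrightarrow C_{R\otimes_S R}\isom C[\G]$ whose image contains the entries of the transition matrix $Z$ and $\det(Z)^{-1}$; since under $\gamma$ the matrix $Z$ is exactly the coefficient matrix $B$ of $V$ (your flagged obstacle, and it is indeed true: under the multiplication isomorphism $R\otimes_S M\isom R\otimes_C V$, the second embedding into $(R\otimes_S R)\otimes_S M$ becomes the coaction $\rho\otimes\rho_V$ on $R\otimes_C V$, so $1\otimes\vect{e}=(\vect{e}\otimes 1)B$ up to an antipode, which is harmless), faithfulness of $V$ makes $j$ surjective, whence $R\otimes_S\tR=R\otimes_S R$ and $\tR=R$ by faithful flatness. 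The paper instead avoids descent and representations entirely: it chooses $C$-linearly independent algebra generators $u_1,\dots,u_r$ of $R$ over $S$ spanning a finite-dimensional subcomodule, defines $M\subseteq R^r$ as the $S$-span of all vectors $\theta^{(n)}(\vect{u})$, and uses Wronskian-type determinants $W(k_1,\dots,k_r)$ together with ID-simplicity of $R$ to prove finite generation of $M$, the equality $R\cdot M=R^r$ (so the standard basis is a constant basis and $R$ is a solution ring), and minimality (any solution ring inside $R$ contains the standard basis, hence the $u_i$, hence equals $R$). What each approach buys: yours is shorter and conceptually transparent --- it exhibits the torsor as the PV-ring of its own associated bundle --- but it rests on inputs external to the paper, namely effectivity of faithfully flat descent along the torsor (the identification $R\otimes_S(R\otimes_C V)^{\G}\isom R\otimes_C V$, a Hopf--Galois/Schneider-type theorem), the existence of a faithful representation of an affine group scheme of finite type, and the cocycle computation above; the paper's Wronskian argument is longer but elementary and entirely self-contained within the tools of Sections 3--5.
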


\begin{proof}
The proof goes similar to \cite{mt:haapvt}, proof of Thm.~3.3(a)$\Rightarrow$(b).

Since, $\Spec(R)$ is a $\G_S$-torsor, $R$ is finitely generated over $S$, and we can choose $C$-linear independent elements $u_1,\dots, u_r\in R$  such that $R$ is generated over $S$ by these elements. By possibly increasing the set of $u$'s we can assume that 
$\rho(\gener{u_1,\dots, u_r}_C)\subseteq \gener{u_1,\dots, u_r}_C\otimes_C C[\G]$, since by general theory on comodules, every element is contained in a finite dimensional comodule (cf.~\cite[Thm.~2.1.3]{mes:ha}).\footnote{As in Remark \ref{rem:rho}, $\rho:=\gamma|_{1\otimes R}$ denotes the coaction of $C[\G]$ on $R$ corresponding to the action of $\G$.}
Then there are $b_{ij}\in C[\G]$ ($i,j=1,\dots, r$) such that $\rho(u_j)=\sum_{i=1}^r u_i\otimes b_{ij}$ for all $j=1,\dots, r$, or written in matrix notation:
$$\rho(u_1,\dots, u_r)=(u_1,\dots, u_r)\otimes B,$$
for $B=(b_{ij})_{1\leq i,j\leq r}$.\\
Since, $\rho$ is a homomorphism of ID-rings, we also obtain for all $n\in\NN$ that
$$\rho(\theta^{(n)}(u_1),\dots, \theta^{(n)}(u_r))=(\theta^{(n)}(u_1),\dots, \theta^{(n)}(u_r))\otimes B.$$

Now, let $M\subseteq R^r$ be the $S$-module generated by all vectors $(\theta^{(n)}(u_1),\dots, \theta^{(n)}(u_r))$ ($n\geq 0$). Then $M$ is an ID-stable subset of $R^r$ by definition and an $S$-module.

We will show that $M$ is finitely generated as $S$-module, that $R\otimes_S M=R\cdot M=R^r$, as well as that for any $\tilde{R}\subsetneqq R$, the standard basis of $R^r$ is not contained in $\tilde{R}\otimes_S M$.\\
The first shows that $M$ is indeed an ID-module over $S$, the second that $R$ is a solution ring for $M$, and the third that $R$ is a minimal solution ring, hence a Picard-Vessiot ring for $M$.

We consider the matrices 
$$W(k_1,\dots, k_r):=\begin{pmatrix} \theta^{(k_1)}(u_1) & \dots & \theta^{(k_1)}(u_r) \\
\vdots & & \vdots \\  \theta^{(k_r)}(u_1) & \dots & \theta^{(k_r)}(u_r) \end{pmatrix}\in \Mat_{r\times r}(R)$$
for $(k_1,\dots, k_r)\in \NN^r$, and the ideal
$$I:=\gener{\det(W(k_1,\dots, k_r))\mid (k_1,\dots, k_r)\in \NN^r}_R\subseteq R$$
generated by the determinants of all these matrices.

Since $\{u_1,\dots, u_r\}$ are $C$-linearly independent, $\{\theta(u_1),\dots, \theta(u_r)\}\subseteq R[[T]]$ are $R$-linearly independent (cf.~\cite[Prop.~1.5]{mt:haapvt}), and therefore, there is a matrix $W(k_1,\dots, k_r)$ having full rank. In particular, $I\ne \{0\}$.
Furthermore, using the Leibniz determinant formula and the product rule for iterative derivations one obtains

$$\theta^{(n)}(\det(W(k_1,\dots, k_r))) =\sum_{n_1+\dots +n_r=n} \left(\begin{smallmatrix}k_1+n_1\\[1mm] k_1\end{smallmatrix}\right)\cdots \left(\begin{smallmatrix} k_r+n_r\\[1mm] k_r\end{smallmatrix}\right) \det(W(k_1+n_1,\dots, k_r+n_r)).$$
Hence, $I$ is an ID-ideal, and since $R$ is ID-simple, we have $I=R$.
Therefore, there exist matrices $W_1,\dots, W_l\in \{ W(k_1,\dots, k_r) \mid (k_1,\dots, k_r)\in \NN^r\}$ and $a_1,\dots, a_l\in R$ such that $1=\sum_{i=1}^l a_i\det(W_i)$.
Using the adjugate matrices $W_i^\#$ of the $W_i$ we get
$$\mathds{1}_r=\sum_{i=1}^l a_i\det(W_i)\mathds{1}_r=\sum_{i=1}^l a_i W_i^\# W_i.$$
This means that the standard basis of $R^r$ is obtained as an $R$-linear combination of the rows of the $W_i$, and hence $R\cdot M=R^r$.

For the finite generation of $M$, we take $W=W(k_1,\dots, k_r)$ of full rank, and observe that $\rho(W)=W\otimes B$, $\rho(\det(W))=\det(W)\otimes \det(B)$ and $\rho(W^\#)=(1\otimes B^\#)\cdot (W^\# \otimes 1)$, by using $WW^\#=\det(W)\mathds{1}_r$.
Write $\theta^{(n)}(\vect{u}):=(\theta^{(n)}(u_1),\dots, \theta^{(n)}(u_r))$, then we get
\begin{eqnarray*}
\gamma(\det(W)\otimes \theta^{(n)}(\vect{u})W^\#)&=&
\det(W)( \theta^{(n)}(\vect{u})\otimes B)(1\otimes B^\#) (W^\# \otimes 1) \\
&=& (\det(W) \theta^{(n)}(\vect{u})\otimes \det(B)\mathds{1}_r) (W^\# \otimes 1) \\
&=& \theta^{(n)}(\vect{u})W^\#\det(W)\otimes \det(B) \\
&=& \gamma(\theta^{(n)}(\vect{u})W^\#\otimes \det(W))
\end{eqnarray*}

Since, $\gamma$ is an isomorphism, we have 
$\theta^{(n)}(\vect{u})W^\#\otimes \det(W)=\det(W)\otimes \theta^{(n)}(\vect{u})W^\#,$
and since the tensor product is taken over $S$, each entry of $\theta^{(n)}(\vect{u})W^\#$ is a $\Quot(S)$-multiple of $\det(W)$. So there is a vector $\vect{s}=(s_1,\dots, s_r)\in S^r$ and $t\in S$ such that
$t\cdot \theta^{(n)}(\vect{u})W^\#=\vect{s}\det(W)$, and hence
$$t\cdot \theta^{(n)}(\vect{u})=\vect{s}W.$$
This shows that all the vectors $\theta^{(n)}(\vect{u})$ are $S$-linearly dependent to the rows of $W$. Recalling that the rows of $W$ were $R$-linearly independent, this show that for any $S$-module $N$ generated by vectors $\theta^{(n)}(\vect{u})$ for several $n\in\NN$ containing the rows of $W$, one has $R\otimes_S N=R\cdot N$.\\
Applying this to the $S$-module $M$ and to the $S$-module $N$ generated by the rows of the $W_i$ ($i=1,\dots, l$) above, we see that $R\otimes_S N=R^r=R\otimes_S M$. Hence by faithful flatness of $R/S$, $M=N$ and $M$ is a finitely generated $S$-module.

Finally, we observe that for any solution ring $\tilde{R}$ inside $R$, the standard basis  of $R^r$ must be contained in $\tilde{R}\otimes_S M\subseteq R^r$, as it is a basis of constant vectors. Since $(u_1,\dots, u_r)=\sum_{i=1}^r u_i e_i \in M$, we get that $u_i\in \tilde{R}$. Hence, $\tilde{R}=R$.
\end{proof}

\pagebreak

\begin{prop}\label{prop:equivalent-conditions}
Let $R/S$ be a PV-extension with Galois group scheme $\G$. For an ID-ring $T$ with $S\subseteq T\subseteq R$ the following are equivalent:
\begin{enumerate}
\item $T$ is a Picard-Vessiot ring over $S$ for some ID-module.
\item $T$ is ID-simple and stable under the action of $\G$, i.e.~$\rho(T)\subseteq T\otimes_C C[\G]$.
\item There is a normal subgroup scheme $\H$ of $\G$ such that $T=R^\H$.
\end{enumerate}
If the equivalent conditions are fulfilled, the normal subgroup scheme $\H$ in iii) can be taken to be $\H=\uGal(R/T)$.
\end{prop}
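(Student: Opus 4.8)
The plan is to prove the three-way equivalence cyclically, $i)\Rightarrow ii)\Rightarrow iii)\Rightarrow i)$, exploiting the torsor/comodule machinery already set up. For $i)\Rightarrow ii)$, suppose $T$ is a PV-ring over $S$ for some ID-module $M'$. ID-simplicity is part of the definition of a solution ring, so only $\G$-stability needs work. The idea is that the inclusion $T\hookrightarrow R$ makes $R$ a solution ring (indeed a PV-ring) for $T\otimes_S M'$ as well, and the coaction $\rho:R\to R\otimes_C C[\G]$ restricted to $T$ should land in $T\otimes_C C[\G]$. Concretely, I would argue that $T$ is generated over $S$ by the entries of a fundamental solution matrix (and inverse determinant) for $M'$ as in Theorem~\ref{thm:explicite-pv-ring}, that these generators span a finite-dimensional $C$-subcomodule of $R$ (since the Galois action permutes fundamental solution matrices by matrices over $C[\G]$), and hence $\rho(T)\subseteq T\otimes_C C[\G]$.

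For $ii)\Rightarrow iii)$, the natural candidate is $\H:=\uGal(R/T)$, the closed subgroup functor of $\G$ fixing $T$ pointwise. First I would check $\H$ is a \emph{closed} subgroup scheme: since $T$ is $\G$-stable and ID-simple with $C_T=C$, the restriction of the coaction equips $T$ with a comodule structure, and $R/T$ is itself a PV-extension by the analog of Lemma~\ref{lem:torsor-is-pv-ring} (one must verify $R/T$ is faithfully flat with $C_R=C_T$ and that $\Spec(R)$ is a torsor under $\uGal(R/T)_T$). By Corollary~\ref{cor:galois-group-scheme} applied to $R/T$, $\H=\uGal(R/T)$ is then represented by $\Spec(C_{R\otimes_T R})$, an affine group scheme of finite type, and the surjection $C_{R\otimes_S R}\twoheadrightarrow C_{R\otimes_T R}$ exhibits $\H$ as a closed subgroup of $\G$. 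Next, $T\subseteq R^\H$ is immediate, and the reverse inclusion $R^\H\subseteq T$ should follow from Proposition~\ref{prop:R^G=S} applied to the PV-extension $R/T$, giving $R^{\uGal(R/T)}=T$. Normality of $\H$ in $\G$ is the delicate point: I expect it to come from $\G$-stability of $T$, which makes $T$ a sub-$C[\G]$-comodule and forces the defining ideal of $\H$ in $C[\G]$ to be a Hopf ideal stable under the relevant coactions; one translates ``$T$ is $\G$-stable'' into ``$\H$ is normal'' by a direct comodule computation on the quotient $C[\G]/I=C[\H]$.

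For $iii)\Rightarrow i)$, assume $T=R^\H$ for a normal closed subgroup scheme $\H\trianglelefteq\G$. The strategy is to realize $T$ as a PV-extension of $S$ directly via Lemma~\ref{lem:torsor-is-pv-ring}. Normality of $\H$ ensures the quotient $\G/\H$ exists as an affine group scheme (this is exactly why normality, and not arbitrary closed subgroups, is required, as flagged in the introduction), with $C[\G/\H]=C[\G]^{\text{(co)}\H}$. I would then show $\Spec(T)$ is a torsor under $(\G/\H)_S$, with the torsor isomorphism $T\otimes_S T\to T\otimes_C C[\G/\H]$ induced by restricting $\gamma:R\otimes_S R\to R\otimes_C C[\G]$ and passing to $\H$-invariants on both sides. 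Faithful flatness of $T/S$ and $C_T=C$ follow from $R/S$ being faithfully flat (Cor.~\ref{cor:faithful-flatness}) together with $R^\H=T$ and $\G$-invariance giving $R^\G=S=T^{\G/\H}$. Lemma~\ref{lem:torsor-is-pv-ring} then yields that $T$ is a PV-ring over $S$.

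The main obstacle I anticipate is the normality bookkeeping in $ii)\Rightarrow iii)$ and its converse: translating the geometric/comodule statement ``$T=R^\H$ is $\G$-stable'' into the Hopf-algebraic statement ``$\H$ is normal in $\G$,'' and conversely constructing the quotient $\G/\H$ and its torsor structure on $\Spec(T)$ when $\H$ is normal. All of this is standard in the free-module (affine group scheme torsor) setting, but here the extra care is that $M$ and the intermediate data need not be free, so one must consistently use the projectivity results (Thm.~\ref{thm:M-projective}, Cor.~\ref{cor:submodule-is-fin-gen}) and the explicit description of PV-rings (Thm.~\ref{thm:explicite-pv-ring}) in place of a single global fundamental solution matrix. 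The final clause, that $\H$ may be taken to be $\uGal(R/T)$, is then simply the identification made during the proof of $ii)\Rightarrow iii)$.
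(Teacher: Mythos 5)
Your overall architecture (the cyclic chain, the candidate $\H=\uGal(R/T)$, and the route to iii)$\Rightarrow$i) via $\H$-invariants of $\gamma$, normality giving $C[\G/\H]\isom C[\G]^\H$, and Lemma~\ref{lem:torsor-is-pv-ring}) coincides with the paper's, and your i)$\Rightarrow$ii) via fundamental solution matrices is a workable alternative to the paper's argument (which instead uses the commutative diagram relating the torsor isomorphisms of $T/S$ and $R/S$). However, your ii)$\Rightarrow$iii) is circular as set up: you invoke Corollary~\ref{cor:galois-group-scheme} (representability of $\uGal(R/T)$ by $\Spec(C_{R\otimes_T R})$) and Proposition~\ref{prop:R^G=S} (to get $R^\H=T$) for the extension $R/T$, and both of these presuppose that $R/T$ is a PV-extension; but your route to that fact --- ``the analog of Lemma~\ref{lem:torsor-is-pv-ring}'' --- needs as input an ID torsor isomorphism $R\otimes_T R\isom R\otimes_C C[\H]$ together with faithful flatness of $R/T$, which in this paper is exactly what Theorem~\ref{thm:torsor-isomorphism} produces \emph{from} $R/T$ being a PV-extension. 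The observation that breaks the circle, and which you are missing, is elementary: under ii), $R$ is itself a PV-ring over $T$ for the ID-module $T\otimes_S M$. Indeed $R\otimes_T(T\otimes_S M)=R\otimes_S M$ is trivial, $C=C_S\subseteq C_T\subseteq C_R=C$, $T$ is ID-simple by hypothesis, and minimality of $R$ over $T$ is inherited from minimality over $S$. With this in hand, $\H:=\uGal(R/T)$ exists, $R^\H=T$ is Proposition~\ref{prop:R^G=S}, and closedness follows from Proposition~\ref{prop:ideal-bijection}: writing $R\otimes_T R=(R\otimes_S R)/I$, one has $\gamma(I)=R\otimes_C J$, hence $C[\H]=C_{R\otimes_T R}=C[\G]/J$. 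Your normality step is also only a declaration of intent (``I expect it to come from\dots a direct comodule computation''); the paper's argument is a two-line functorial one you should use instead: for $g\in\G(D)$, $h\in\H(D)$, the automorphism $g^{-1}hg$ fixes $T\otimes_C D$ pointwise, because $g$ preserves $T\otimes_C D$ (stability) while $h$ fixes it, so $g^{-1}hg\in\H(D)$.

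In iii)$\Rightarrow$i) two hypotheses of Lemma~\ref{lem:torsor-is-pv-ring} are left undischarged. First, ID-simplicity of $T=R^\H$ is never verified; the paper checks it directly (an ID-ideal $I\ideal T$ generates an ID-ideal $RI\ideal R$, which is $\{0\}$ or $R$, and $I=(RI)^\H$ is then $\{0\}$ or $T$). Second, faithful flatness of $T/S$ does \emph{not} ``follow from $R/S$ being faithfully flat together with $R^\H=T$'': at this stage of the argument you do not yet know that $R$ is (faithfully) flat over $T$, so no sandwiching argument is available. The paper instead observes that $T$ consists of ID-finite elements over $S$ (Corollary~\ref{cor:faithful-flatness}(a)) and is ID-stable, hence is a union of projective ID-modules over $S$, and then reruns the proof of Corollary~\ref{cor:faithful-flatness}(b). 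Both gaps are fixable, but they are precisely the places where the non-free, non-Noetherian setting forces one to use the paper's ID-finiteness and projectivity machinery (Theorem~\ref{thm:M-projective}, Corollary~\ref{cor:submodule-is-fin-gen}) rather than general torsor formalism.
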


\begin{proof}
i)$\Rightarrow$ii): (cf. \cite[proof of Prop.~3.4]{am:igsidgg})\\
Since, $T$ is a PV-extension over $S$, we obtain a commutative diagram 
\begin{center}$\xymatrix{
T\otimes_S T \ar[r]^-{\isom} \ar[d] & T\otimes_C C[\uGal(T/S)]=T\otimes_C
C_{T\otimes_S T} \ar[d] \\
R\otimes_S R \ar[r]^-{\isom}& R\otimes_C C[\G]= R\otimes_C C_{R\otimes_S R}
}$\end{center}
where the vertical maps are just the inclusions. But this implies $\rho(T)\subseteq T\otimes_C
C_{T\otimes_S T}\subseteq T\otimes_C C[\G]$, i.\,e. $T$ is
stable under the action of $\G$.

ii)$\Rightarrow$iii): Since $R$ also is a PV-ring over $T$ for $T\otimes_S M$, the Galois group $\H:=\uGal(R/T)$ exists, and by Prop.~\ref{prop:R^G=S}, we have $R^\H=T$. The group scheme $\H$ is indeed a closed subgroup scheme of $\G$:
$R\otimes_T R$ is a factor ring of $R\otimes_S R$ by an ID-ideal $I$. Since $\gamma:R\otimes_S R\to R\otimes_C C[\G]$ is an isomorphism, one has $\gamma(I)=R\otimes_C J$ for an ideal $J\ideal C[\G]$ by Prop.~\ref{prop:ideal-bijection}. Hence, $C[\H]=C_{R\otimes_T R}=C_{(R\otimes_S R)/I}=C[\G]/J$.
Furthermore, since $T$ is stable under the $\G$-action, for all $C$-algebras $D$ and $g\in \G(D)$, $h\in \H(D)\subseteq \G(D)$, also $g^{-1}hg$ fixes the elements of $T\otimes_C D$, i.e.~$g^{-1}hg\in \H(D)$.
Hence, $\H$ is a normal subgroup of $\G$.

iii)$\Rightarrow$i): (comp.~\cite[proof of Thm.~11.5(ii)]{am:gticngg})\\
First at all we show that $T=R^\H$ is ID-simple, that $T$ has constants $C$, and that $T/S$ is faithfully flat. If $I\ideal T$ is an ID-ideal, then $RI\ideal R$ is an ID-ideal, and hence, $RI$ equals $\{0\}$ or $R$. So $I=(RI)^\H$ is $\{0\}$ or $R^\H=T$. Furthermore $C=C_S\subseteq C_T\subseteq C_R=C$, hence $C_T=C$. Faithful flatness is clear by the proof of Cor.~\ref{cor:faithful-flatness}, since $T\subseteq R$ consists of ID-finite elements. 

The isomorphism $\gamma:R\otimes_S R\to R\otimes_C C[\G]$ is $\H$-equivariant, considered by the action of $\H$ on the right tensor factor, and hence we get an ID-isomorphism $R\otimes_S R^\H\isom R\otimes_C C[\G]^\H$. Again by taking invariants (this time $\H$ is only acting on the first tensor factor), this isomorphism restricts to an isomorphism
$$R^\H\otimes_S R^\H\isom R^\H\otimes_C C[\G]^\H.$$
Since $\H$ is normal, $\G/\H$ is an affine group scheme with $C[\G/\H]\isom C[\G]^\H$ (cf.~\cite[III,\S 3, Thm.~5.6 and 5.8]{md-pg:ga}). By construction of the isomorphism, it is an ID-isomorphism, and it is the isomorphism corresponding to the map $(\G/\H)_S\times_S \Spec(R^\H) \to \Spec(R^\H)\times_S\Spec(R^\H)$ indicating that $\Spec(R^\H)$ is a $(\G/\H)_S$-torsor. Therefore, by Lemma \ref{lem:torsor-is-pv-ring}, $T=R^\H$ is a PV-ring over~$S$.

The statement on the choice of $\H$ has already been seen in the proof of the implication ii)$\Rightarrow$iii).
\end{proof}

\begin{thm}\label{thm:galois-correspondence} (Galois correspondence)
Let $R/S$ be a PV-extension for some ID-module and $\G=\uGal(R/S)$. Then there is a bijection between
$$\fT:=\{ T \mid S\subseteq T\subseteq R \text{ intermediate PV-ring} \}$$
and
$$\fH:=\{ \H \mid \H\leq \G \text{ closed normal subgroup scheme of }\G \}$$
given by
$\Psi:\fT\to \fH, T\mapsto \uGal(R/T)$ resp.~$\Phi:\fH\to \fT, \H\mapsto R^{\H}$.
\end{thm}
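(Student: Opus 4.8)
The plan is to verify that $\Psi$ and $\Phi$ are well-defined maps between $\fT$ and $\fH$ and that they are mutually inverse, drawing almost everything from Proposition \ref{prop:equivalent-conditions} and Proposition \ref{prop:R^G=S}. For well-definedness, note first that for $T\in\fT$ the ring $R$ is a PV-ring over $T$ (for the module $T\otimes_S M$), so $\uGal(R/T)$ exists, and the implication i)$\Rightarrow$ii)$\Rightarrow$iii) of Proposition \ref{prop:equivalent-conditions} shows it is a closed normal subgroup scheme of $\G$; hence $\Psi(T)\in\fH$. Conversely, for $\H\in\fH$ the implication iii)$\Rightarrow$i) shows $R^\H$ is a PV-ring over $S$, and since $S=R^\G\subseteq R^\H\subseteq R$ it is an intermediate PV-ring, so $\Phi(\H)\in\fT$.

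The composition $\Phi\circ\Psi$ is immediate: for $T\in\fT$ the extension $R/T$ is a PV-extension with group $\uGal(R/T)$, so Proposition \ref{prop:R^G=S} applied to $R/T$ yields $R^{\uGal(R/T)}=T$, i.e.~$\Phi(\Psi(T))=T$. The real work is in $\Psi\circ\Phi$. Given $\H\in\fH$, I would set $T:=R^\H$ and $\H':=\uGal(R/T)$ and show $\H'=\H$. One inclusion is for free: since $\H$ fixes $T=R^\H$, we have $\H\leq\H'$. For equality I would exploit the restriction homomorphism. Because $T$ is stable under $\G$ (implication i)$\Rightarrow$ii)), every $\sigma\in\G(D)$ restricts to an ID-automorphism of $T\otimes_C D$ over $S\otimes_C D$, which defines a homomorphism $\operatorname{res}:\G\to\uGal(T/S)$ with kernel exactly $\uGal(R/T)=\H'$. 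On the other hand, the proof of iii)$\Rightarrow$i) identifies $\uGal(T/S)$ with $\G/\H$ through the restriction of $\gamma$ to $T\otimes_S T\isom T\otimes_C C[\G]^\H$. I would then check that under this identification $\operatorname{res}$ becomes the canonical projection $\pi:\G\to\G/\H$: on comorphisms both maps are the inclusion $C[\G]^\H\hookrightarrow C[\G]$, as one reads off from the commutative square comparing $\gamma$ on $T\otimes_S T$ with $\gamma$ on $R\otimes_S R$. Since $\ker(\pi)=\H$, this forces $\H'=\ker(\operatorname{res})=\H$, and the two maps are mutually inverse bijections.

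I expect the inclusion $\H'\subseteq\H$ to be the main obstacle. It genuinely cannot be deduced from the equality of invariant rings $R^\H=R^{\H'}=T$ alone, because, as Remark \ref{rem:converse-to-R^G=S} illustrates, distinct subgroups can share the same ring of invariants. The point where normality must enter is precisely here: it is normality of $\H$ that makes $T=R^\H$ itself a PV-ring with $\uGal(T/S)\isom\G/\H$, and that allows the restriction homomorphism to pin $\uGal(R/T)$ down as the kernel $\H$ rather than merely bounding it from above. Verifying that $\operatorname{res}$ coincides with the canonical projection — i.e.~the compatibility of the identification $\uGal(T/S)\isom\G/\H$ with the $\G$-action — is the one routine but essential diagram chase I would carry out in detail.
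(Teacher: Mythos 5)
Your proposal is correct and follows essentially the same route as the paper's own proof: well-definedness of both maps and $\Phi\circ\Psi=\id$ from Proposition \ref{prop:equivalent-conditions} (with Proposition \ref{prop:R^G=S} applied to $R/T$), and $\Psi\circ\Phi=\id$ via the identification $\uGal(R^\H/S)\isom\G/\H$ together with recognizing $\uGal(R/R^\H)$ as the kernel of the restriction homomorphism, which corresponds to the projection $\G\to\G/\H$. The only difference is one of emphasis: you spell out the comorphism-level verification that restriction matches the canonical projection, a compatibility the paper asserts without detail.
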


\begin{rem}
The maps $\Psi$ and $\Phi$ can be defined between all intermediate ID-rings and all closed subgroups of $\G$.
But contrary to the Galois correspondences in \cite{am:gticngg} and others, one does not get a full bijection, 
as we only consider the rings and not the fields of fractions. Remark \ref{rem:converse-to-R^G=S} provides an example that the extension $\Phi$ would not be injective in general.\\
Maybe, one would get a full bijection when considering schemes with ID-simple structure sheaves, because $\G/\H$, and therefore $\Spec(R)/\H$ is a non-affine scheme in general.
\end{rem}

\begin{proof}[Proof of Thm.~\ref{thm:galois-correspondence}]
Prop.~\ref{prop:equivalent-conditions} already shows most things: If $\H$ is a normal subgroup scheme of $\G$, then $R^\H$ is a PV-ring. Hence $\Phi$ is welldefined. If $T$ is an intermediate PV-ring, the group scheme $\H:=\uGal(R/T)$ is a closed normal subgroup scheme of $\G$. Hence, $\Psi$ is welldefined. Furthermore, $R^{\uGal(R/T)}=T$ showing $\Phi\circ \Psi=\id$.

It remains to show that $\uGal(R/R^\H)=\H$ for all closed normal subgroup schemes $\H$ of $\G$.
In the proof of Prop.~\ref{prop:equivalent-conditions}, it is shown that $\uGal(R^\H/S)\isom \G/\H$. Furthermore, the projection map $\G\to \G/\H$ corresponds to the map $\uGal(R/S)\to \uGal(R^\H/S)$ given by restricting the automorphisms in $\uGal(R/S)$ to $R^\H$. Hence, $\uGal(R/R^\H)$ is the kernel of this map, i.e.~equals $\H$.
\end{proof}

\bibliographystyle{plain}
\def\cprime{$'$}


\vspace*{.5cm}

\end{document}